\newtheorem{thm}{Theorem}[section]
\newtheorem{lem}[thm]{Lemma}
\newtheorem{prop}[thm]{Proposition}
\theoremstyle{definition}
\newtheorem{defn}[thm]{Definition}
\theoremstyle{remark}
\newtheorem{rem}[thm]{Remark}
\numberwithin{equation}{section}
\numberwithin{thm}{section}
\newcommand{\Z}{{\mathbb{Z}}}
\newcommand{\C}{{\mathbb{C}}}
\newcommand{\R}{{\mathbb{R}}}
\newcommand{\T}{{\mathbb{T}}}
\newcommand{\eps}{{\varepsilon}}
\newcommand{\lsm}{\lesssim}
\DeclareMathOperator*{\wlim}{w-lim}
\newcommand{\qtq}[1]{\quad\text{#1}\quad}
\newcommand{\cha}{\chi_n^0}
\newcommand{\chc}{\chi_n^2}
\newcommand{\chd}{\chi_n^3}
\newcommand{\che}{\chi_n^4}
\newcommand{\pmn}{P_{\le N_n}}
\newcommand{\pml}{ P_{\le N_n}^{L_n}}
\newcommand{\tun}{\tilde u_n}
\newcommand{\Po}{\mathcal P}
\newcommand{\pmll}{P_{\le 2N_n}^{L_n}}
\newcommand{\btn}{\mathbb{T}_n}
\let\Im=\undefined\DeclareMathOperator*{\Im}{Im}
\newcounter{smalllist}
\title[Symplectic non-squeezing for the cubic NLS on the line]{Symplectic non-squeezing\\for the cubic NLS on the line}
\author[R. Killip]{Rowan Killip}
\address{Department of Mathematics, University of California, Los Angeles, CA 90095}%
\email{killip@math.ucla.edu}
\author[M. Visan]{Monica Visan}
\address{Department of Mathematics, University of California, Los Angeles, CA 90095}
\email{visan@math.ucla.edu}
\author[X. Zhang]{Xiaoyi Zhang}
\address{Department of Mathematics, University of Iowa, Iowa City, IA 52242}%
\email{xiaoyi-zhang@uiowa.edu}
{\normalsize }
\begin{document}

\begin{abstract}
We prove symplectic non-squeezing for the cubic nonlinear Schr\"odinger equation on the line via finite-dimensional approximation.
\end{abstract}

\maketitle

\section{Introduction}

The main result of this paper is a symplectic non-squeezing result for the cubic nonlinear Schr\"odinger equation on the line:
\begin{align*}\label{nls}\tag{NLS}
iu_t+\Delta u= \pm |u|^2 u.
\end{align*}
We consider this equation for initial data in the underlying symplectic Hilbert space $L^2(\R)$.  For this class of initial data, the equation is globally well-posed in both the defocusing and focusing cases, that is, with $+$ and $-$ signs in front of the nonlinearity, respectively.  Correspondingly, we will be treating the defocusing and focusing cases on equal footing.

Our main result is the following:

\begin{thm}[Non-squeezing for the cubic NLS]\label{thm:nsqz}
Fix $z_*\in L^2(\R)$, $l\in L^2(\R)$ with $\|l\|_2=1$, $\alpha\in \C$, $0<r<R<\infty$, and $T>0$.  Then there exists $u_0\in B(z_*, R)$ such that the solution $u$ to \eqref{nls} with initial data $u(0)=u_0$ satisfies
\begin{align}\label{th:1}
|\langle l, u(T)\rangle-\alpha|>r.
\end{align}
\end{thm}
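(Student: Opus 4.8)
The plan is the standard route to infinite-dimensional non-squeezing: assume the conclusion fails, replace the flow of \eqref{nls} by a genuinely finite-dimensional Hamiltonian flow to which Gromov's non-squeezing theorem applies, and then dismantle the approximation. Throughout, let $\Phi_t$ denote the time-$t$ solution map of \eqref{nls} on $L^2(\R)$; it is globally defined by the $L^2$-subcritical well-posedness theory together with mass conservation, it is a symplectomorphism for the standard symplectic structure, and, since $\|l\|_2=1$, the map $w\mapsto\langle l,w\rangle$ is (up to translation by $\alpha$) the symplectic projection onto the complex line $\C l$. If Theorem \ref{thm:nsqz} is false, then every $u_0\in B(z_*,R)$ yields $|\langle l,\Phi_T(u_0)\rangle-\alpha|\le r$, i.e. $\Phi_T\big(B(z_*,R)\big)$ lies inside the symplectic cylinder $\{w:|\langle l,w\rangle-\alpha|\le r\}$; it suffices to contradict this. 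By a routine perturbation argument exploiting the strict inequality $r<R$ and the continuity of both sides of \eqref{th:1} in $(z_*,l)$ on bounded sets, we may assume in addition that $z_*$ and $l$ are smooth and compactly supported.

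\textbf{Finite-dimensional model.} Fix radii $r<\rho_1<\rho_2<R$, let $L_n\to\infty$ and $N_n\to\infty$, and set $\btn=\R/L_n\Z$ and $E_n:=\pmn L^2(\btn)$, a finite-dimensional symplectic subspace. On $E_n$ consider the truncated equation $i\partial_t w+\partial_x^2 w=\pm\,\pmn\big(|w|^2w\big)$: this is the Hamiltonian flow of $H_n(w)=\tfrac12\int_{\btn}|\partial_x w|^2\pm\tfrac14\int_{\btn}|w|^4$ restricted to $E_n$, it conserves mass, and therefore its time-$T$ map $\Phi^n_T$ is a well-defined symplectomorphism of the \emph{finite-dimensional} space $E_n$. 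Let $z_*^n,l_n\in E_n$ ($l_n$ a unit vector) be obtained from $z_*,l$ by periodizing (legitimate once $L_n$ exceeds the size of their supports) and projecting to frequencies $\le N_n$; then $z_*^n\to z_*$ and $l_n\to l$, and $\C l_n$ is a symplectic complex line in $E_n$.

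\textbf{Stability, Gromov, and conclusion.} The technical core is an estimate of the form: there are $\delta_n\to0$ so that for every $w_0\in B(z_*^n,\rho_2)\subset E_n$ one has $\big|\langle l_n,\Phi^n_T(w_0)\rangle-\langle l,\Phi_T(w_0^\sharp)\rangle\big|\le\delta_n$, where $w_0^\sharp\in L^2(\R)$ is a smoothly cut off de-periodization of $w_0$. This is proved by interpolating two comparisons on the fixed interval $[0,T]$: (i) the solution of the \emph{untruncated} NLS on $\btn$ with data $w_0$ and the solution of \eqref{nls} on $\R$ with data $w_0^\sharp$ contribute almost the same amount to the pairing against $l_n$, because $l_n$ is concentrated near the origin while the unmatched part of $w_0$ lives near the boundary of $\btn$ at distance $\sim L_n$, so by approximate finite speed of propagation and Strichartz bounds its effect at time $T$ is $o(1)$; and (ii) reinstating the projection $\pmn$ in the nonlinearity is a perturbation that is controlled by the $L^2$-subcritical stability lemma, the relevant $[0,T]$ spacetime norms being bounded in terms of $T$ and the mass $\lesssim\|z_*\|_2+\rho_2$ alone, hence uniformly in $w_0$. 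Granting this, Gromov's non-squeezing theorem, applied in the finite-dimensional $E_n$, forbids $\Phi^n_T\big(B(z_*^n,\rho_2)\big)$ from lying inside $\{w\in E_n:|\langle l_n,w\rangle-\alpha|\le\rho_1\}$, so there is $w_0^n\in B(z_*^n,\rho_2)$ with $|\langle l_n,\Phi^n_T(w_0^n)\rangle-\alpha|>\rho_1$. For $n$ large, $z_*^n\to z_*$ and $\rho_2<R$ force $(w_0^n)^\sharp\in B(z_*,R)$, while the stability estimate and $\delta_n\to0$ give $\big|\langle l,\Phi_T((w_0^n)^\sharp)\rangle-\alpha\big|\ge\rho_1-\delta_n>r$, contradicting the standing hypothesis. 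This proves Theorem \ref{thm:nsqz}.

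\textbf{Main obstacle.} The decisive difficulty is step (i) of the stability estimate. Schr\"odinger evolution has no exact finite speed of propagation, so the passage between the line and a large torus must be made quantitative and, crucially, uniform over the entire ball $B(z_*^n,\rho_2)$ on the fixed interval $[0,T]$ — in particular for an \emph{arbitrary} point produced by Gromov's theorem, whose mass need not be concentrated. Handling this requires a careful localized propagation estimate combined with the uniform $[0,T]$ spacetime bounds of the $L^2$-subcritical theory and a sufficiently robust stability lemma; by comparison the remaining steps amount to bookkeeping with the gap $r<\rho_1<\rho_2<R$ to absorb the vanishing errors.
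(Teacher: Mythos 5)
Your overall architecture — smooth approximation of $z_*$ and $l$, a frequency-truncated NLS on a large torus $\btn$ as the finite-dimensional Hamiltonian system for Gromov, and a two-stage comparison back to \eqref{nls} on the line — is the same as the paper's, and your step (i) (torus-to-line via approximate finite speed of propagation of the frequency-truncated flow) is exactly the content of the paper's Theorem~\ref{thm:app}. The gap is in step (ii). You claim that ``reinstating the projection $\pmn$ in the nonlinearity is a perturbation controlled by the $L^2$-subcritical stability lemma, \dots hence uniformly in $w_0$.'' That estimate is false: for $w_0\in E_n$ with $\|w_0\|_2\lesssim M$, the error term one must feed to a stability lemma when comparing $iu_t+\Delta u=\pmn F(\pmn u)$ to $iv_t+\Delta v=F(v)$ is essentially $(1-\pmn)F(u)$, and this is merely \emph{bounded} in the dual Strichartz norm $N([0,T])$ --- e.g. $\|(1-\pmn)F(u)\|_{L_t^{4/3}L_x^1}\lesssim T^{1/2}\|u\|_S^3$ with no gain in $N_n$ --- not small. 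Data such as $w_0=\phi+e^{i\xi_n x}\psi$ with $\xi_n\sim N_n$ produce a cubic nonlinearity with $O(1)$ mass at frequencies $\sim 2N_n$, so the truncation is not a vanishing perturbation uniformly over the ball. A stability lemma requires a small source, and none is available here; so the claimed $\delta_n\to 0$ uniform over $B(z_*^n,\rho_2)$ cannot be obtained this way.

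The paper avoids this by not attempting any uniform removal of the frequency cutoff. Gromov produces a \emph{sequence} of torus witnesses $u_{0,n}$; the paper passes to a subsequence along which the cut-off data $\cha u_{0,n}$ converge \emph{weakly} in $L^2(\R)$, and then invokes Theorem~\ref{T:weak wp}: solutions of the truncated equations \eqref{eqpn} with $N_n\to\infty$ and weakly convergent data converge weakly at every time to the solution of the full \eqref{nls}. The engine behind Theorem~\ref{T:weak wp} is the local compactness Proposition~\ref{P:compact} (precompactness in $L^p_{t,x}$, $p<6$, on bounded sets), and Part~1 of its proof handles precisely the term $(1-\pmn)F(\pmn u_n)$ --- but only after pairing with a fixed $\psi\in L^2$, where one can use $\|(1-\pmn)\psi\|_2\to 0$; that is a weak-limit statement, not a norm bound. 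Since the squeezing functional $w\mapsto\langle l,w\rangle$ is weakly continuous, weak convergence at time $T$ suffices to transfer the Gromov inequality. In short: you need to replace your contradiction-plus-uniform-stability scheme by the paper's direct construction of a witness through weak compactness; otherwise step (ii) as written would fail.
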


Colloquially, this says that the flow associated to \eqref{nls} does not carry any ball of radius $R$ into any cylinder whose cross-section has radius $r<R$.  Note that it is immaterial where the ball and cylinder are centered; however, it is essential that the cross-section of the cylinder is defined with respect to a pair of canonically conjugate coordinates.

The formulation of this result is dictated by the non-squeezing theorem of Gromov, \cite[\S0.3A]{Gromov}, which shows the parallel assertion for \emph{any} symplectomorphism of finite-dimensional Hilbert spaces.  At the present time, it is unknown whether this general assertion extends to the infinite-dimensional setting. 

Non-squeezing has been proved for a number of PDE models; see \cite{Bourg:approx,Bourg:aspects,CKSTT:squeeze,HongKwon,KVZ:nsqz2d,Kuksin,Mendelson,Roumegoux}.  We have given an extensive review of this prior work in our paper \cite{KVZ:nsqz2d} and so will not repeat ourselves here.  Rather, we wish to focus on our particular motivations for treating the model \eqref{nls}.

With the exception of \cite{KVZ:nsqz2d}, which considers the cubic NLS on $\R^2$, all the papers listed above considered the non-squeezing problem for equations posed on tori.  One of the initial goals for the paper \cite{KVZ:nsqz2d} was to treat (for the first time) a problem in infinite volume.  Moreover, we sought also to obtain a first unconditional result where the regularity required to define the symplectic form coincided with the scaling-critical regularity for the equation.  

Many of the central difficulties encountered in \cite{KVZ:nsqz2d} stem from the criticality of the problem considered there, to the point that they obscure the novel aspects associated to working in infinite volume.  One of our main motivations in writing this paper is to elaborate our previous approach in a setting unburdened by the specter of criticality.  In this way, we hope also to provide a more transparent framework for attacking (subcritical) non-squeezing problems in infinite volume.

In keeping with the expository goal just mentioned, we have elected here to treat a single model, namely, the cubic NLS in one dimension.  What follows applies equally well to any mass-subcritical NLS in any space dimension --- it is simply a matter of adjusting the H\"older/Strichartz exponents appropriately.

Let us now briefly outline the method of proof.  Like previous authors, the goal is to combine a suitable notion of finite-dimensional approximation with Gromov's theorem in that setting.  The particular manner in which we do this mirrors \cite{KVZ:nsqz2d}, but less so other prior work.
 
In the presence of a frequency truncation, NLS on a torus (of, say, large circumference) becomes a finite-dimensional system and so is non-squeezing in the sense of Gromov.  In particular, there is an initial datum $u_0$ in the ball of radius $R$ about $z_*$ so that the corresponding solution $u$ obeys \eqref{th:1} at time $T$.  We say that $u$ is a \emph{witness} to non-squeezing.

Now choosing a sequence of frequency cutoff parameters $N_n\to\infty$ and a sequence of circumferences $L_n\to\infty$, Gromov guarantees that there is a sequence of witnesses $u_n$.  Our overarching goal is to take a ``limit'' of these solutions and so obtain a witness to non-squeezing for the full (untruncated) model on the whole line.   This goal is realized in two steps: (i) Removal of the frequency cutoff for the problem in infinite volume; see Section~\ref{S:4}. (ii) Approximation of the frequency-truncated model in infinite volume by that on a large torus; see Section~\ref{S:5}.  The frequency truncation is essential for the second step since it enforces a form of finite speed of propagation.

The principal simplifications afforded by working in the subcritical case appear in the treatment of step (i); they are two-fold.  First, the proof of large-data space-time bounds for solutions to \eqref{nls} is elementary and applies also (after trivial modifications) to the frequency-truncated PDE.  This is not true for the critical problem.  Space-time bounds for the mass-critical NLS is a highly nontrivial result of Dodson \cite{Dodson:3,Dodson:1,Dodson}; moreover, the argument does not apply in the frequency-truncated setting because the truncation ruins the monotonicity formulae at the heart of his argument.  For a proof of uniform space-time bounds for suitably frequency-truncated cubic NLS on $\R^2$, see Section~4 in \cite{KVZ:nsqz2d}. 

The second major simplification relative to \cite{KVZ:nsqz2d} appears when we prove wellposedness in the weak topology on $L^2$.  Indeed, the reader will notice that the statement of Theorem~\ref{T:weak wp} here is essentially identical to that of Theorem~6.1 in \cite{KVZ:nsqz2d}; however, the two proofs bear almost no relation to one another.  Here we exploit the fact that bounded-mass solutions are compact on bounded sets in space-time in a scaling-critical $L^p$ norm; this is simply not true in the mass-critical case.  See Section~4 for further remarks on this topic.

\subsection*{Acknowledgements} This material is based on work supported by the National Science Foundation under Grant No. 0932078000 while the authors were in residence at the MSRI in Berkeley, California, during the Fall 2015 semester.  R.~K. and M.~V. are grateful to IHES, France, which provided the perfect environment for the completion of this work. 

This work was partially supported by a grant from the Simons Foundation (\#342360 to Rowan Killip).
R.~K. was further supported by NSF grants DMS-1265868 and DMS-1600942.  M.~V. was supported by NSF grant DMS-1500707. X.~Z. was supported by a Simons Collaboration grant.

\section{Preliminaries}

Throughout this paper, we will write the nonlinearity as $F(u):=\pm|u|^2u$.

\begin{defn}[Strichartz spaces] We define the Strichartz norm of a space-time function via
$$
\| u \|_{S(I\times\R)} := \| u\|_{C^{ }_t L^2_x (I\times\R)} + \| u\|_{L_t^4 L^\infty_x (I\times\R)}
$$
and the dual norm via
$$
	\| G \|_{N(I\times\R)} := \inf_{G=G_1+G_2} \| G_1\|_{L^1_t L^2_x (I\times\R)} + \| G_2 \|_{L^{4/3}_t L^{1 }_x (I\times\R)}.
$$
We define Strichartz spaces on the torus analogously.
\end{defn}

The preceding definition permits us to write the full family of Strichartz estimates in a very compact form; see \eqref{StrichartzIneq} below.  The other basic linear estimate that we need is local smoothing; see \eqref{LocalSmoothing} below.

\begin{lem}[Basic linear estimates]
Suppose $u:\R\times\R\to\C$ obeys
$$
(i\partial_t +\Delta) u = G.
$$
Then for every $T>0$ and every $R>0$,
\begin{align}\label{StrichartzIneq}
\| u\|_{S([0,T]\times\R)} \lsm \| u(0) \|_{L^2_x} + \| G \|_{N([0,T]\times\R)},
\end{align}
\begin{align}\label{LocalSmoothing}
\| u\|_{L^2_{t,x}([0,T]\times[-R,R])}
	\lsm R^{1/2} \Bigl\{ \| |\nabla|^{-1/2} u(0) \|_{L^2_x} + \| |\nabla|^{-1/2} G \|_{L^1_t L^2_x([0,T]\times\R)} \Bigr\}. 
\end{align}
\end{lem}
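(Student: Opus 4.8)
The plan is to deduce both bounds from the corresponding estimates for the free propagator $e^{it\Delta}$ via the Duhamel representation $u(t)=e^{it\Delta}u(0)-i\int_0^t e^{i(t-s)\Delta}G(s)\,ds$, which holds for $u$ in a suitable a priori class, with the general case following by a routine approximation argument. For \eqref{StrichartzIneq}, the homogeneous term $e^{it\Delta}u(0)$ is controlled by the classical Strichartz inequalities on $\R\times\R$: when $d=1$ the pairs $(q,r)=(\infty,2)$ and $(q,r)=(4,\infty)$ are both admissible (there is no endpoint obstruction in one dimension), so $\|e^{it\Delta}u(0)\|_{S([0,T]\times\R)}\lsm\|u(0)\|_{L^2_x}$, with constant independent of $T$ by scale invariance of the global estimate and with continuity of $t\mapsto e^{it\Delta}u(0)$ in $L^2_x$ coming from strong continuity of the propagator. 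For the Duhamel term one writes $G=G_1+G_2$ and applies the retarded Strichartz estimate with dual exponents $(1,2)=(\infty,2)'$ and $(4/3,1)=(4,\infty)'$; the contribution of $G_1$ to the $C_tL^2_x$ norm is immediate from Minkowski and $L^2_x$-unitarity of $e^{it\Delta}$, while the remaining pieces follow from the global-in-time retarded estimate via the Christ--Kiselev lemma, which applies here since each output time-exponent ($4$ or $\infty$) strictly dominates the corresponding input time-exponent ($1$ or $4/3$).

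For \eqref{LocalSmoothing}, the key ingredient is the homogeneous local smoothing (Kato / Kenig--Ponce--Vega) estimate $\bigl\|\,|\nabla|^{1/2}e^{it\Delta}f\bigr\|_{L^\infty_xL^2_t(\R\times\R)}\lsm\|f\|_{L^2_x}$. Applying it to $f=|\nabla|^{-1/2}u(0)$ and using the trivial bound $\|v\|_{L^2_{t,x}([0,T]\times[-R,R])}\le(2R)^{1/2}\|v\|_{L^\infty_xL^2_t(\R\times\R)}$ estimates the homogeneous term by $R^{1/2}\bigl\|\,|\nabla|^{-1/2}u(0)\bigr\|_{L^2_x}$. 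For the Duhamel term, Minkowski's integral inequality in the mixed norm gives $\bigl\|\int_0^t e^{i(t-s)\Delta}G(s)\,ds\bigr\|_{L^2_{t,x}([0,T]\times[-R,R])}\le\int_0^T\bigl\|e^{i(t-s)\Delta}G(s)\bigr\|_{L^2_{t,x}([s,T]\times[-R,R])}\,ds$; for each fixed $s$, a translation in $t$ reduces the inner norm to the homogeneous case with data $G(s)$, so it is $\lsm R^{1/2}\bigl\|\,|\nabla|^{-1/2}G(s)\bigr\|_{L^2_x}$, and integrating over $s\in[0,T]$ produces $R^{1/2}\bigl\|\,|\nabla|^{-1/2}G\bigr\|_{L^1_tL^2_x([0,T]\times\R)}$. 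Adding the two contributions gives \eqref{LocalSmoothing}.

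Both inequalities are entirely standard, so the only points deserving attention are bookkeeping: that all constants are independent of $T$ and enter $R$ only through $R^{1/2}$ — automatic here because the underlying global-in-time estimates on $\R\times\R$ are scale invariant — and that, because $N(I\times\R)$ and the right-hand side of \eqref{LocalSmoothing} place only $L^1$ (or better) integrability in time on the forcing term, Minkowski's inequality suffices for the inhomogeneous local smoothing bound, so no Christ--Kiselev-type argument is needed there. There is no substantive obstacle; the only mild care required is recalling the precise form of the retarded Strichartz and homogeneous local smoothing estimates and checking the exponent arithmetic.
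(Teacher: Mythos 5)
The paper records this lemma without proof, as it simply collects the standard Strichartz inequality (in the compact $S/N$-norm form) together with the classical Kato--Kenig--Ponce--Vega local smoothing estimate. Your derivation is correct and is the standard one: Duhamel plus admissibility of $(\infty,2)$ and $(4,\infty)$ in $d=1$, with Christ--Kiselev for the off-diagonal retarded pieces (the exponent check $4>4/3$, $4>1$, $\infty>4/3$ is exactly what is needed); and for \eqref{LocalSmoothing}, the homogeneous Kato estimate $\||\nabla|^{1/2}e^{it\Delta}f\|_{L^\infty_x L^2_t}\lsm\|f\|_{L^2}$ together with a translation-in-time plus Minkowski argument for the Duhamel term, which indeed needs no Christ--Kiselev since the forcing carries $L^1_t$ integrability.
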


Let $m_{\le 1}:\R\to[0,1]$ be smooth, even, and obey
$$
m_{\le 1}(\xi) = 1 \text{ for $|\xi|\leq 1$} \qtq{and} m_{\le 1}(\xi) = 0 \text{ for $|\xi|\geq 2$.}
$$
We define Littlewood--Paley projections onto low frequencies according to
\begin{align}\label{E:LP defn}
\widehat{ P_{\leq N} f }(\xi) := m_{\le 1}(\xi/N) \hat f(\xi)
\end{align}
and then projections onto individual frequency bands via
\begin{align}\label{E:LP defn'}
f_N := P_N f := [ P_{\leq N} - P_{\leq N/2} ] f .
\end{align}

\section{Well-posedness theory for several NLS equations}

In the course of the proof of Theorem~\ref{thm:nsqz}, we will need to consider the cubic NLS both with and without frequency truncation.  To consider both cases simultaneously, we consider the following general form:
\begin{align}\label{eq:1}
iu_t+\Delta u=  \Po F(\Po u), 
\end{align}
where $\Po$ is either the identity or the projection to low frequencies $P_{\leq N}$ for some $N\in 2^\Z$.  For the results of this section, it only matters that $\Po$ is $L^p$ bounded for all $1\leq p\leq\infty$.

\begin{defn}(Solution)\label{D:solution}
Given an interval $I\subseteq\R$ with $0\in I$, we say that $u:I\times\R\to\C$ is a \emph{solution} to \eqref{eq:1} with initial data $u_0\in L^2(\R)$ at time $t=0$ if $u$ lies in the classes $C^{ }_t L^2_x(K\times\R)$ and  $L^6_{t,x}(K\times\R)$ for any compact $K\subset I$ and obeys the Duhamel formula
\begin{align*}
u(t) = e^{it\Delta} u_0 - i \int_{0}^{t} e^{i(t-s)\Delta} \Po F\bigl(\Po u(s) \bigr) \, ds
\end{align*}
for all $t \in I$.
\end{defn}

Such solutions to \eqref{eq:1} are unique and conserve both mass and energy:
\begin{align*}
\int_{\R} |u(t,x)|^2 \,dx \qtq{and}
E(u(t)):=\int_{\R}\tfrac 12 |\nabla u(t,x)|^2 \pm\tfrac14 |\Po u(t,x)|^4 \,dx,
\end{align*}
respectively.  Indeed, \eqref{eq:1} is the Hamiltonian evolution associated to $E(u)$ through the standard symplectic structure:
$$
\omega:L^2(\R)\times L^2(\R)\to \R \qtq{with} \omega(u,v)=\Im \int_\R u(x)\bar v(x)\, dx.
$$

The well-posedness theory for \eqref{eq:1} reflects the subcritical nature of this equation with respect to the mass.  We record this classical result without a proof.

\begin{lem}[Well-posedness of \eqref{eq:1}]\label{lm:loc}
Let $u_0\in L^2(\R)$ with $\|u_0\|_2\le M$.  There exists a unique global solution $u:\R\times\R\to \C$ to \eqref{eq:1} with initial data $u(0)=u_0$.  Moreover, for any $T>0$,
\begin{align*}
 \|u\|_{S([0,T]\times\R)}\lsm_T M.
\end{align*}
If additionally $u_0\in H^1(\R)$, then
\begin{align*}
 \|\partial_x u\|_{S([0,T]\times\R)} \lsm_{M,T} \| \partial_x u_0 \|_{L^2(\R)}.
\end{align*} 
\end{lem}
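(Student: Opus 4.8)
Since $\Po$ is bounded on $L^p(\R)$ for every $1\le p\le\infty$ and commutes with both $\partial_x$ and $e^{it\Delta}$, it enters none of the estimates below except through these two properties, so the truncated and untruncated equations may be treated in one stroke. The plan is the standard contraction-mapping argument for the Duhamel map $\Phi(u)(t):=e^{it\Delta}u_0-i\int_0^t e^{i(t-s)\Delta}\Po F(\Po u(s))\,ds$ on the ball of radius $2C\|u_0\|_{L^2_x}$ in $S(I\times\R)$ for a short interval $I\ni 0$. The only place the mass-subcritical character of \eqref{eq:1} is used is that pairing \eqref{StrichartzIneq} with H\"older in time produces a gain of a power of $|I|$: putting the nonlinearity entirely in the $L^{4/3}_tL^1_x$ slot of the $N$-norm and using $\| |\Po u|^3 \|_{L^1_x}\le\|\Po u\|_{L^\infty_x}\|\Po u\|_{L^2_x}^2$, one gets $\|\Po F(\Po u)\|_{N(I\times\R)}\lsm|I|^{1/2}\|u\|_{S(I\times\R)}^3$, and, via $|F(a)-F(b)|\lsm(|a|^2+|b|^2)|a-b|$, also $\|\Po F(\Po u)-\Po F(\Po v)\|_{N(I\times\R)}\lsm|I|^{1/2}(\|u\|_{S(I\times\R)}^2+\|v\|_{S(I\times\R)}^2)\|u-v\|_{S(I\times\R)}$. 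Hence $\Phi$ is a contraction on that ball once $|I|^{1/2}\|u_0\|_{L^2_x}^2$ lies below an absolute constant, which yields a unique solution on $I$ with $\|u\|_{S(I\times\R)}\lsm\|u_0\|_{L^2_x}$. Because $\|u\|_{L^6_{t,x}}\le\|u\|_{C_tL^2_x}^{1/3}\|u\|_{L^4_tL^\infty_x}^{2/3}\le\|u\|_S$, this $u$ lies in the class of Definition~\ref{D:solution}, and running the difference estimate on short subintervals gives uniqueness within that class.

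\textbf{Globalization and the $L^2$ bound.} Since the length of the local existence interval depends only on $\|u_0\|_{L^2_x}\le M$ and mass is conserved along the flow, we iterate the construction with a fixed step $\tau=\tau(M)$ and so extend $u$ to all of $\R$. Covering $[0,T]$ by the resulting finitely many subintervals and summing the corresponding $S$-bounds --- the $C_tL^2_x$ part being pinned down by conservation of mass and the $L^4_tL^\infty_x$ part being $\ell^4$-summable --- produces $\|u\|_{S([0,T]\times\R)}\lsm_T M$.

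\textbf{Persistence of regularity.} If moreover $u_0\in H^1(\R)$, then $v:=\partial_x u$ solves $(i\partial_t+\Delta)v=\Po\,\partial_x F(\Po u)$ with $v(0)=\partial_x u_0$, and $|\partial_x F(\Po u)|\lsm|\Po u|^2\,|\partial_x\Po u|$ pointwise. Arguing exactly as before --- now with $\| |\Po u|^2|\partial_x\Po u| \|_{L^1_x}\le\|\Po u\|_{L^\infty_x}\|\Po u\|_{L^2_x}\|\partial_x\Po u\|_{L^2_x}$ and $\|\Po u\|_{L^2_x}\le M$ --- gives $\|v\|_{S(I\times\R)}\lsm\|\partial_x u(\inf I)\|_{L^2_x}+|I|^{1/2}M\|u\|_{L^4_tL^\infty_x(I\times\R)}\|v\|_{S(I\times\R)}$. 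Partitioning $[0,T]$ into finitely many intervals $I_j$ --- the number of which is bounded in terms of $M$ and $T$ via the space-time bound of the previous step --- on each of which $|I_j|^{1/2}M\|u\|_{L^4_tL^\infty_x(I_j\times\R)}$ is small enough to be absorbed into the left-hand side, a bootstrap on each $I_j$ yields $\|v\|_{S(I_j\times\R)}\lsm\|\partial_x u(\inf I_j)\|_{L^2_x}$; chaining these bounds across the $I_j$'s produces $\|\partial_x u\|_{S([0,T]\times\R)}\lsm_{M,T}\|\partial_x u_0\|_{L^2_x}$. The differentiation is justified in the usual way, by first running this argument for smooth data and then passing to the limit using the $L^2$ well-posedness just established.

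\textbf{Main obstacle.} There is no essential difficulty here --- this is the classical subcritical theory --- so the only items requiring care are the bookkeeping in the globalization step, namely ensuring that the number of iteration intervals, and hence the final constants, depend only on $M$ and $T$, together with the routine regularization device behind persistence of regularity. The operators $\Po$ are never an obstruction, precisely because every estimate above is insensitive to anything beyond their uniform $L^p$-boundedness.
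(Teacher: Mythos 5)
The paper records Lemma~\ref{lm:loc} as a classical result without proof, so there is no paper argument to compare against; your proof is the standard mass-subcritical Strichartz contraction scheme and is essentially correct. Two small remarks. First, your difference estimate is phrased in the $S$-norm, but the uniqueness claim is within the class of Definition~\ref{D:solution}, which posits only $C_tL^2_x\cap L^6_{t,x}$; to cover that class one should instead run the difference estimate in $L^6_{t,x}$, e.g.\ via $\|F(\Po u)-F(\Po v)\|_{L^{6/5}_{t,x}(I\times\R)}\lesssim(\|u\|_{L^6_{t,x}(I\times\R)}^2+\|v\|_{L^6_{t,x}(I\times\R)}^2)\|u-v\|_{L^6_{t,x}(I\times\R)}$ together with the absolute continuity of the $L^6_{t,x}$ norm in $t$, since an a priori $L^6_{t,x}$ solution need not lie in $L^4_tL^\infty_x$. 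Second, the iteration you describe yields $\|u\|_{S([0,T]\times\R)}\lesssim K^{1/4}M$ with $K\sim TM^4$ subintervals, i.e.\ a bound of order $T^{1/4}M^2$ rather than literally $C(T)M$; the stated $\lesssim_T M$ should be read as $\lesssim_{T,M}1$, which is also how the lemma is used in the paper, e.g.\ in \eqref{apbound}.
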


\section{Local compactness and well-posedness in the weak topology}\label{S:4}

The arguments presented in this section show that families of solutions to \eqref{nls} that are uniformly bounded in mass are precompact in $L^p_{t,x}$ for $p<6$ on bounded sets in space-time.  Furthermore, we have well-posedness in the weak topology on $L^2$; specifically, if we take a sequence of solutions $u_n$ to \eqref{nls} for which the initial data $u_n(0)$ converges \emph{weakly} in $L^2(\R)$, then $u_n(t)$ converges weakly at all times $t\in\R$.  Moreover, the pointwise in time weak limit is in fact a solution to \eqref{nls}.

Justification of the assertions made in the first paragraph can be found within the proof of \cite[Theorem~1.1]{Nakanishi:SIAM}; however, this is not sufficient for our proof of symplectic non-squeezing.  Rather, we have to prove a slightly stronger assertion that allows the functions $u_n$ to obey different equations for different $n$; specifically,
\begin{align}\label{eqpn}
i\partial_t u_n+\Delta u_n=P_{\le N_n} F(P_{\le N_n}u_n) 
\end{align}
where $N_n\to \infty$; see Theorem~\ref{T:weak wp} below.  For the sake of completeness, we give an unabridged proof of this theorem, despite substantial overlap with the arguments presented in~\cite{Nakanishi:SIAM}.

What follows adapts easily to the setting of any mass-subcritical NLS.  It does \emph{not} apply at criticality: compactness fails in any scale-invariant space-time norm (even on compact sets).  Nevertheless, well-posedness in the weak topology on $L^2$ does hold for the mass-critical NLS; see \cite{KVZ:nsqz2d}.  Well-posedness in the weak topology has also been demonstrated for some energy-critical models; see \cite{BG99, KMV:cq}.  In all three critical results \cite{BG99, KMV:cq, KVZ:nsqz2d}, the key to overcoming the lack of compactness is to employ concentration compactness principles.  We warn the reader, however, that the augments presented in \cite{KVZ:nsqz2d} are far more complicated than would be needed to merely verify well-posedness in the weak topology for the mass-critical NLS.  In that paper, we show non-squeezing and so (as here) we were compelled to consider frequency-truncated models analogous to \eqref{eqpn}.  Due to criticality, this change has a profound effect on the analysis; see \cite{KVZ:nsqz2d} for further discussion.

Simple necessary and sufficient conditions for a set $\mathcal F\subseteq L^p(\R^n)$ to be precompact were given in \cite{Riesz}, perfecting earlier work of Kolmogorov and Tamarkin.  In addition to boundedness (in $L^p$ norm), the conditions are tightness and equicontinuity:
$$
\lim_{R\to\infty} \sup_{f\in\mathcal F} \|f\|_{L^p(\{|x|>R\})} =0
	\qtq{and} \lim_{h\to 0} \sup_{f\in\mathcal F} \|f(\cdot+h)-f(\cdot)\|_{L^p(\R^n)}  =0,
$$
respectively. The basic workhorse for equicontinuity in our setting is the following lemma:

\begin{lem}\label{L:workhorse}
Fix $T>0$ and suppose $u:[-2T,2T]\times\R\to\C$ obeys
\begin{align}
\| u \|_{\tilde S} := \| u \|_{L^\infty_t L^2_x([-2T,2T]\times \R)} + \| (i\partial_t+\Delta) u \|_{L^2_{t,x}([-2T,2T]\times\R)} < \infty.
\end{align}
Then
\begin{align}\label{22Holder}
 \| u(t+\tau,x+y) - u(t,x) \|_{L^2_{t,x}([-T,T]\times[-R,R])} \lesssim_{R,T} \bigl\{ |\tau|^{1/5} + |y|^{1/3} \bigr\} \| u \|_{\tilde S},
\end{align}
uniformly for $|\tau|\leq T$ and $y\in \R$.
\end{lem}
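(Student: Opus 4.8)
The plan is to estimate the spatial and temporal increments separately and then combine them via the triangle inequality, so it suffices to prove
\[
\| u(t,x+y) - u(t,x) \|_{L^2_{t,x}([-T,T]\times[-R,R])} \lesssim_{R,T} |y|^{1/3} \| u \|_{\tilde S}
\]
and the analogous bound with $|\tau|^{1/5}$ for the temporal increment. For the spatial increment I would first reduce to small $|y|$ (for $|y|\gtrsim 1$ the left-hand side is controlled by $2\|u\|_{L^2_{t,x}([-T,T]\times[-2R,2R])}\lesssim_{R,T}\|u\|_{\tilde S}\lesssim |y|^{1/3}\|u\|_{\tilde S}$). For small $|y|$, the natural tool is a Littlewood--Paley decomposition $u=\sum_N P_N u + P_{\le 1}u$: on each piece one has the Bernstein-type inequality $\|P_N u(\cdot,x+y)-P_N u(\cdot,x)\|_{L^2}\lesssim \min\{1,N|y|\}\|P_N u\|_{L^2}$, and I would sum the geometric series, balancing at $N\sim |y|^{-1}$. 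The subtlety is that $u$ is not given to lie in $H^{1/3}$; what we have is the space-time bound on $(i\partial_t+\Delta)u$. So the real work is to show that the hypothesis $\|u\|_{\tilde S}<\infty$ delivers exactly $|y|^{1/3}$ worth of regularity through the local smoothing inequality \eqref{LocalSmoothing}, which gains half a derivative.

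More precisely, here is the mechanism I expect to use. Fix a smooth time cutoff $\eta$ supported in $[-2T,2T]$ and equal to $1$ on $[-T,T]$, and set $v=\eta u$; then $(i\partial_t+\Delta)v = \eta G + i\eta' u =: \tilde G$ with $\|\tilde G\|_{L^2_{t,x}(\R\times\R)}\lesssim_T \|u\|_{\tilde S}$ and $v(-2T)=0$. Writing $v$ via the Duhamel formula from time $-2T$, local smoothing \eqref{LocalSmoothing} (applied with the source term, the free-evolution part vanishing) gives $\||\nabla|^{1/2} v\|_{L^2_{t,x}([-T,T]\times[-R,R])}\lesssim_{R,T} \|u\|_{\tilde S}$ after one more derivative count — more carefully, one applies \eqref{LocalSmoothing} to $|\nabla|^{1/2}v$, whose source is $|\nabla|^{1/2}\tilde G$, but since $\tilde G\in L^2_{t,x}$ rather than having an extra half-derivative, the cleaner route is to interpolate: the $L^\infty_t L^2_x$ bound on $v$ gives zero derivatives, and a single application of local smoothing to $v$ itself gives control of $\||\nabla|^{-1/2}$-smoothed quantities — so in fact one should dualize/interpolate to land at $H^{1/2}_x$ in the averaged $L^2_t L^2_x([-T,T]\times[-R,R])$ sense. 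Interpolating the resulting $L^2_t H^{1/2}_x$ bound (with constant $\lesssim_{R,T}\|u\|_{\tilde S}$) against the trivial $L^2_t L^2_x$ bound yields $L^2_t H^{s}_x$ control for every $s<1/2$, in particular $s=1/3$, and $H^{1/3}_x$ regularity translates into the $|y|^{1/3}$ modulus of continuity in the $x$-variable by the Littlewood--Paley argument above. (One must localize first: since the $H^{1/2}_x$ bound is only local in $x$, compare $u(t,x+y)$ with $u(t,x)$ only for $x\in[-R,R]$ and $|y|\le 1$, where everything stays inside $[-R-1,R+1]$.)

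For the temporal increment I would use the equation to trade a time derivative for two spatial derivatives: schematically $u(t+\tau)-u(t) = \int_t^{t+\tau}\partial_s u\,ds$ and $\partial_s u = i\Delta u - iG$, so morally $\|u(\cdot+\tau)-u(\cdot)\|_{L^2}\lesssim |\tau|\,(\|\Delta u\| + \|G\|)$ — but we do not control $\Delta u$. Instead one splits in frequency: the low frequencies $P_{\le N}u$ contribute $\lesssim |\tau| N^2 \|u\|_{L^\infty_t L^2_x}$ plus $|\tau|\|G\|_{L^1_tL^2_x}$-type terms via Duhamel, while the high frequencies $P_{>N}u$ are handled not by the trivial bound (which loses) but again through the gained half-derivative from local smoothing, giving $\lesssim N^{-1/2}\|u\|_{\tilde S}$ up to time-integration; wait — more honestly, the right bookkeeping pairs the $L^2_t H^{1/2}_x$ bound just established with a Duhamel estimate for the increment, and optimizing $|\tau|N^{\cdot}$ against $N^{-1/2}$ over $N$ produces the exponent $\tfrac15$ (the $\tfrac15$ rather than $\tfrac14$ reflecting that we must also pay an $L^2_t$ loss, i.e. a Hölder exponent in time, when converting the $L^2_t H^{1/2}_x$ control into a pointwise-in-shift statement). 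I will carry out this optimization carefully in the body of the proof. The main obstacle, and the place where the hypothesis $\|u\|_{\tilde S}<\infty$ is used in an essential and non-obvious way, is precisely the extraction of the local $L^2_t H^{1/2}_x$ bound from local smoothing: this is the one step that genuinely requires the smoothing estimate \eqref{LocalSmoothing} rather than just Strichartz, and getting the cutoff-and-Duhamel argument to close cleanly (so that the free part really does drop out and the source term is only assumed in $L^2_{t,x}$, not in any space with extra derivatives) is the delicate point.
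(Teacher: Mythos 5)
Your plan has the right architecture and the same three tools the paper uses: local smoothing to gain half a derivative, a frequency split at a scale $N$ to be optimized, Bernstein for the spatial low-frequency increment, and the Duhamel formula for the temporal low-frequency increment. But the paper's execution is considerably more direct than yours, and the detour you propose hides two real difficulties.

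The paper never establishes an $L^2_t H^{1/2}_x$ bound. Instead it applies \eqref{LocalSmoothing} directly to $u_{>N}$ and to its space-time translate, using only that $|\nabla|^{-1/2}P_{>N}$ has operator norm $\lesssim N^{-1/2}$ on $L^2$. Since the hypothesis is posed on $[-2T,2T]$ while the conclusion is on $[-T,T]$ with $|\tau|\le T$, translation invariance of \eqref{LocalSmoothing} plus the triangle inequality give at once
\[
\| u_{>N}(t+\tau,x+y) - u_{>N}(t,x) \|_{L^2_{t,x}([-T,T]\times[-R,R])} \lesssim R^{\frac12} N^{-\frac12}(1+T^{\frac12})\, \| u \|_{\tilde S},
\]
with no cutoff $\eta$ and no interpolation. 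For the low frequencies, Bernstein gives $\lesssim N|y|\,\|u\|_{\tilde S}$ in the $\tau=0$ case, and the Duhamel representation gives $\lesssim (N^2|\tau|+|\tau|^{1/2})\|u\|_{\tilde S}$ in the $y=0$ case; balancing against $N^{-1/2}$ at $N=|y|^{-2/3}$ and $N=|\tau|^{-2/5}$ produces exactly $|y|^{1/3}$ and $|\tau|^{1/5}$.

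Concerning your route: the hesitation about applying \eqref{LocalSmoothing} to $|\nabla|^{1/2}v$ is unfounded — the estimate asks for $\||\nabla|^{-1/2}(\text{source})\|_{L^1_t L^2_x}$, and the source of $|\nabla|^{1/2}v$ is $|\nabla|^{1/2}\tilde G$, so the required quantity is just $\|\tilde G\|_{L^1_t L^2_x}$, which you control. Your proposed ``cleaner interpolation route,'' however, does not close: applying \eqref{LocalSmoothing} to $v$ itself requires $\||\nabla|^{-1/2}\tilde G\|_{L^1_tL^2_x}$, which is not controlled by $\|\tilde G\|_{L^2_{t,x}}$ because of low frequencies. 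And even granted a bound on $\||\nabla|^{1/2}u\|_{L^2_{t,x}([-T,T]\times[-R,R])}$, converting this local-in-$x$ control into a modulus of continuity on $[-R,R]$ is delicate precisely because $|\nabla|^{1/2}$ is nonlocal: the standard $H^s$ modulus-of-continuity bound is global in $x$, so you would need an additional spatial cutoff and a commutator estimate to localize, after which you would essentially have re-derived the $N^{-1/2}$ high-frequency bound anyway. The frequency-localized estimate $\|P_{>N}u\|_{L^2_{t,x}(\text{local})}\lesssim N^{-1/2}$ is exactly the piece of the $H^{1/2}$ statement that survives localization, and the paper uses only that.
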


\begin{proof}
It is sufficient to prove the result when $y=0$ and when $\tau=0$; the full result then follows by the triangle inequality.  In both cases, we use \eqref{LocalSmoothing} to estimate the high-frequency portion as follows:
\begin{align}\label{22uhi}
\| u_{>N}(t+\tau,x+y) - u_{>N}(t,x) \|_{L^2_{t,x}([-T,T]\times[-R,R])} \lesssim R^{\frac12} N^{-\frac12}(1+T^{\frac12}) \| u \|_{\tilde S}.
\end{align}

Next we turn to the low-frequency contribution. Consider first the case $\tau=0$.  By Bernstein's inequality,
$$
\| u_{\leq N}(t,x+y) - u_{\leq N}(t,x) \|_{L^2_x(\R)} \lesssim N|y| \, \| u(t) \|_{L^2_x(\R)} \lesssim N|y| \, \| u \|_{\tilde S}.
$$
Therefore, setting $N=|y|^{-2/3}$, integrating in time, and using \eqref{22uhi}, we obtain
\begin{align}\label{22ulo2}
 \| u(t,x+y) - u(t,x) \|_{L^2_{t,x}([-T,T]\times[-R,R])} \lesssim (R^{\frac12}+(RT)^{\frac12}+T^{\frac12} ) |y|^{\frac13}  \| u \|_{\tilde S}.
\end{align}

Consider now the case $y=0$.   Here it is convenient to use the Duhamel representation of $u$:
$$
u(t) = e^{it\Delta} u(0) - i \int_0^t e^{i(t-s)\Delta} (i\partial_s+\Delta) u(s)\,ds.
$$
To exploit this identity, we first observe that 
$$
\bigl\| P_{\leq N} \bigl[ e^{i\tau\Delta} - 1] e^{it\Delta} u(0) \bigr\|_{L^2_{x}(\R)} \lesssim N^2 |\tau| \| u(0) \|_{L^2_{x}(\R)}.
$$
Then, by the Duhamel representation and the Strichartz inequality, we obtain
\begin{align*}
\bigl\| u_{\leq N}(t+\tau) - u_{\leq N}(t) \bigr\|_{L^2_x(\R)}
	&\lesssim N^2 |\tau| \| u(0) \|_{L^2_{x}(\R)} + \| (i\partial_t+\Delta)u\|_{L^1_t L^2_x([t,t+\tau]\times\R)} \\
&\lesssim \{ N^2 |\tau| + |\tau|^{\frac12} \}  \| u \|_{\tilde S}.
\end{align*}
Combining this with \eqref{22uhi} and choosing $N=|\tau|^{-2/5}$ yields
\begin{equation}\label{22ulo1}
\bigl\| u(t+\tau) - u(t) \bigr\|_{L^2_{t,x}([-T,T]\times[-R,R])}
	\lesssim (R^{\frac12}+(RT)^{\frac12}+ T^{\frac12}) (|\tau|^{\frac15} +|\tau|^{\frac12}) \| u \|_{\tilde S}.
\end{equation}

This completes the proof of the lemma.
\end{proof}

\begin{prop}\label{P:compact}
Let $u_n:\R\times\R\to\C$ be a sequence of solutions to \eqref{eqpn} corresponding to some sequence of $N_n>0$.  We assume that
\begin{equation}\label{apmassbound}
M:= \sup_n \| u_n(0) \|_{L^2_x(\R)} < \infty.
\end{equation}
Then there exist $v:\R\times\R\to\C$ and a subsequence in $n$ so that
\begin{align}\label{E:P:compact}
\lim_{n\to\infty} \| u_n - v \|_{L^p_{t,x}([-T,T]\times[-R,R])} = 0,
\end{align}
for all $R>0$, all $T>0$, and all $1\leq p <6$.
\end{prop}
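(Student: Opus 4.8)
The plan is to extract a limit $v$ by a two-step compactness argument: first produce a weak limit for the solutions themselves and a strong limit via the Riesz--Kolmogorov criterion, then promote weak convergence to the strong $L^p$ convergence claimed. First I would invoke Lemma~\ref{lm:loc} (whose proof applies verbatim to the frequency-truncated equation \eqref{eqpn} with constants uniform in $N_n$) together with the mass bound \eqref{apmassbound} to conclude that $\|u_n\|_{S([-T,T]\times\R)}\lsm_T M$ and, by the chain of estimates, that $\|u_n\|_{L^6_{t,x}([-T,T]\times\R)}\lsm_T M$ uniformly in $n$. Since $L^6_{t,x}$ controls $L^p_{t,x}$ on the bounded set $[-T,T]\times[-R,R]$ for $p\le 6$, the sequence $\{u_n\}$ is bounded in $L^p_{t,x}([-T,T]\times[-R,R])$; interpolating with $L^\infty_tL^2_x$ control, it suffices to obtain $L^2_{t,x}$ convergence on every such box and then upgrade to all $p<6$ by H\"older and uniform $L^6$ bounds. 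So I would fix attention on precompactness in $L^2_{t,x}([-T,T]\times[-R,R])$.

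The heart of the matter is verifying the Riesz--Kolmogorov conditions --- boundedness, tightness, and equicontinuity --- for the family $\{u_n\}$ restricted to $[-T,T]\times[-R,R]$ (viewed inside, say, $[-2T,2T]\times\R$). Boundedness is the uniform $L^2_{t,x}$ bound just mentioned. For equicontinuity in both time and space, I would apply Lemma~\ref{L:workhorse}: the point is that $(i\partial_t+\Delta)u_n = P_{\le N_n}F(P_{\le N_n}u_n)$, and by Bernstein together with the uniform Strichartz bounds, $\|P_{\le N_n}F(P_{\le N_n}u_n)\|_{L^2_{t,x}([-2T,2T]\times\R)}$ is bounded uniformly in $n$ --- indeed $\|F(w)\|_{L^2_{t,x}}\lsm \|w\|_{L^6_{t,x}}^3$, which is finite and uniform. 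Hence $\|u_n\|_{\tilde S}\lsm_{M,T}1$, and \eqref{22Holder} gives equicontinuity in space-time with a modulus $\{|\tau|^{1/5}+|y|^{1/3}\}$ uniform in $n$. Tightness at spatial infinity is the one genuinely extra ingredient: I would prove $\lim_{R\to\infty}\sup_n\|u_n\|_{L^6_{t,x}([-T,T]\times\{|x|>R\})}=0$ (or the analogous $L^2$ statement) using the dispersive decay of the linear flow on the part of the data localized to frequencies $\gtrsim$ a fixed scale, together with spatial localization of the data; concretely one splits $u_n(0)$ into a piece supported on $|x|\le \rho$ plus a small-mass tail, propagates, and exploits local smoothing / Strichartz to control how far mass can travel in time $[-2T,2T]$. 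Since the data need not be tight a priori --- only bounded in mass --- one actually argues tightness of the \emph{space-time} function on $[-T,T]\times\R$ directly from finite speed of propagation of $L^2$ mass under NLS, i.e.\ a local mass conservation / virial-type bound, rather than from tightness of the data.

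With Riesz--Kolmogorov in hand I obtain, on each box $[-T,T]\times[-R,R]$, a strongly $L^2_{t,x}$-convergent subsequence; a diagonal argument over an exhaustion $R,T\to\infty$ (through, say, integer values) produces a single subsequence and a limit function $v:\R\times\R\to\C$ with $u_n\to v$ in $L^2_{t,x}$ on every bounded box. Finally, to get all $p<6$, I interpolate: for $2\le p<6$ write $\|u_n-v\|_{L^p_{t,x}(\mathrm{box})}\le \|u_n-v\|_{L^2_{t,x}(\mathrm{box})}^{\theta}\|u_n-v\|_{L^6_{t,x}(\mathrm{box})}^{1-\theta}$ with the $L^6$ factor bounded uniformly (using the uniform bound on $u_n$ and the fact that $v\in L^6$, which follows from Fatou applied to the uniform $L^6$ bounds), so the $L^2$ convergence forces $L^p$ convergence; for $1\le p<2$ one uses H\"older on the finite-measure box. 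I expect the main obstacle to be the tightness step: unlike equicontinuity, which Lemma~\ref{L:workhorse} hands us essentially for free, tightness requires a quantitative finite-speed-of-propagation statement for $L^2$ mass that holds uniformly in the truncation parameter $N_n$ --- one must check that the frequency projections $P_{\le N_n}$ do not destroy the relevant local mass monotonicity, which is why the argument is phrased through the (robust) combination of local smoothing \eqref{LocalSmoothing} and Strichartz rather than through a monotonicity formula sensitive to the structure of the nonlinearity.
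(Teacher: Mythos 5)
Your plan agrees with the paper's up to and including the equicontinuity step: Lemma~\ref{lm:loc} gives uniform Strichartz bounds, Lemma~\ref{L:workhorse} gives equicontinuity, and H\"older together with a diagonal argument upgrades $L^2$ convergence to $L^p$ convergence ($p<6$) on all boxes. The gap is in the tightness step, and it is a real one. The statement you propose to prove,
\[
\lim_{R\to\infty}\ \sup_n\ \|u_n\|_{L^6_{t,x}([-T,T]\times\{|x|>R\})}=0,
\]
or its $L^2$ analogue, is false under the standing hypotheses: the initial data are only assumed bounded in mass, not tight. If $u_n(0)$ is a fixed profile translated to $x=n$, then (since $P_{\le N_n}$ commutes with translations) $u_n(t,x)=w_n(t,x-n)$ for solutions $w_n$ with data the fixed profile, so the mass of $u_n(t)$ stays concentrated near $x=n$ for bounded times and the supremum over $n$ does not vanish as $R\to\infty$. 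Finite speed of propagation \emph{preserves} spatial localization; it cannot \emph{create} it when the data themselves escape to infinity. Thus the route you flag as the ``main obstacle'' would in fact lead to trying to prove something false.

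The resolution is that no such tightness is needed, because \eqref{E:P:compact} is convergence on a \emph{bounded} space-time box. The paper applies the Riesz--Kolmogorov criterion not to $u_n$ but to $\chi u_n$, where $\chi$ is a smooth cutoff equal to $1$ on $[-T,T]\times[-R,R]$ and supported in $[-2T,2T]\times[-2R,2R]$. Then $\{\chi u_n\}$ is bounded in $L^2_{t,x}(\R\times\R)$ by the uniform Strichartz bound, equicontinuous by Lemma~\ref{L:workhorse} combined with the smoothness of $\chi$, and tight for free since the functions have compact support. Precompactness of $\{\chi u_n\}$ in $L^2(\R^2)$ follows with no finite-speed-of-propagation input, and the remainder of your argument (diagonal argument, interpolation to $p<6$) then goes through as you describe.
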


\begin{proof}
A simple diagonal argument shows that it suffices to consider a single fixed pair $R>0$ and $T>0$.  In what follows, implicit constants will be permitted to depend on $R$, $T$, and $M$.

In view of Lemma~\ref{lm:loc} and \eqref{apmassbound}, we have
\begin{equation}\label{apbound}
\sup_n \| u_n \|_{S([-4T,4T]\times\R)} \lesssim 1.
\end{equation}
Consequently, if we define $\chi:\R^2\to[0,1]$ as a smooth cutoff satisfying
$$
\chi(t,x)= \begin{cases} 1 &:\text{ if } |t|\leq T \text{ and } |x|\leq R, \\
0 &:\text{ if } |t| > 2T \text{ or } |x|> 2R, \end{cases}
$$
then the sequence $\chi u_n$ is uniformly bounded in $L^2_{t,x}(\R\times\R)$.  Moreover, by Lemma~\ref{L:workhorse} and \eqref{apbound}, it is also equicontinuous.  As it is compactly supported, it is also tight.  Thus, $\{\chi u_n\}$ is precompact in $L^2_{t,x}(\R\times\R)$ and so there is a subsequence such that \eqref{E:P:compact} holds with $p=2$.  That it holds for the other values $1\leq p <6$ then follows from H\"older and \eqref{apbound}, which implies that $\{\chi u_n\}$ is uniformly bounded in $L^6_{t,x}([-T,T]\times\R)$.
\end{proof}

\begin{thm}\label{T:weak wp}
Let $u_n:\R\times\R\to\C$ be a sequence of solutions to \eqref{eqpn} corresponding to a sequence $N_n\to\infty$.  We assume that
\begin{equation}
u_n(0) \rightharpoonup u_{\infty,0}  \quad\text{weakly in $L^2(\R)$}
\end{equation}
and define $u_\infty$ to be the solution to \eqref{nls} with $u_\infty(0)=u_{\infty,0}$.  Then
\begin{align}\label{E:P:weak wp}
u_n(t) \rightharpoonup u_\infty(t)  \quad\text{weakly in $L^2(\R)$}
\end{align}
for all $t\in\R$.
\end{thm}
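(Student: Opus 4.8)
The plan is to prove \eqref{E:P:weak wp} by combining the local compactness from Proposition~\ref{P:compact} with a uniqueness argument for the limiting equation \eqref{nls}. First I would fix a sequence as in the hypotheses and note that by the uniform boundedness principle $M:=\sup_n\|u_n(0)\|_{L^2}<\infty$, so Lemma~\ref{lm:loc} gives uniform Strichartz bounds $\|u_n\|_{S([-T,T]\times\R)}\lsm_T 1$ for every $T$, and Proposition~\ref{P:compact} supplies (after passing to a subsequence, exhausting $T,R\to\infty$ and diagonalizing) a function $v:\R\times\R\to\C$ with $u_n\to v$ strongly in $L^p_{t,x}$ on every bounded space-time set for every $1\le p<6$. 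The point of working on the whole line rather than a torus is that the strong local convergence, together with the uniform mass bound, is exactly what one needs to pass to the limit in the Duhamel formula.

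Next I would identify $v$ as a (distributional, hence by the well-posedness theory genuine) solution to \eqref{nls}. The key observation is that $P_{\le N_n}u_n\to v$ in, say, $L^4_{t,x}$ on bounded sets: this follows because $\|(1-P_{\le N_n})u_n\|$ is controlled via local smoothing \eqref{LocalSmoothing} (as in the bound \eqref{22uhi}) and tends to $0$ since $N_n\to\infty$, while $u_n\to v$ strongly there. Consequently $P_{\le N_n}F(P_{\le N_n}u_n)=P_{\le N_n}(|P_{\le N_n}u_n|^2 P_{\le N_n}u_n)\to F(v)=|v|^2v$ in the dual Strichartz space $N$ on any compact time interval, using that the cubic map is locally Lipschitz from $L^4_{t,x}$ into $L^{4/3}_tL^1_x$ together with the uniform $L^4_tL^\infty_x$ bounds from \eqref{apbound} to control a difference-of-cubes expansion; the outer $P_{\le N_n}$ is harmless since it is bounded on every $L^p$ and converges strongly to the identity. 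Testing the Duhamel identity for $u_n$ against a fixed $\phi\in C_c^\infty$ and passing to the limit — the linear term using $u_n(0)\rightharpoonup u_{\infty,0}$ and the continuity of $e^{it\Delta}$ on $L^2$, the Duhamel integral using the $N$-space convergence just established and the Strichartz estimate \eqref{StrichartzIneq} — shows that $v$ satisfies the Duhamel formula for \eqref{nls} with $v(0)=u_{\infty,0}$. By the uniqueness in Lemma~\ref{lm:loc}, $v=u_\infty$.

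It remains to upgrade this to weak convergence $u_n(t)\rightharpoonup u_\infty(t)$ at \emph{every} fixed time $t$, and to remove the subsequence. The first point follows by a standard argument: for a.e.\ $t$ in any bounded interval, strong $L^p_{t,x}$ convergence on bounded sets forces $u_n(t)\to v(t)=u_\infty(t)$ strongly in $L^p_x$ on bounded sets along the subsequence, and since $\|u_n(t)\|_{L^2}\le M$ uniformly (mass conservation), this gives $u_n(t)\rightharpoonup u_\infty(t)$ in $L^2(\R)$ for a.e.\ $t$; for the remaining times one interpolates using the equicontinuity in time built into Lemma~\ref{L:workhorse} — more cleanly, for fixed $\phi\in L^2$ the functions $t\mapsto\langle\phi,u_n(t)\rangle$ are equi-Lipschitz on bounded intervals in the sense that their increments are controlled via the Duhamel formula (linear term by $N^2|\tau|$ on frequencies $\le N$, plus a small high-frequency tail, plus the uniformly bounded nonlinearity), so pointwise a.e.\ convergence of a uniformly bounded equicontinuous family is convergence everywhere. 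Finally, to dispense with the subsequence one runs the usual subsequence-of-subsequence argument: every subsequence of $(u_n)$ has a further subsequence along which the above produces a solution to \eqref{nls} with initial data $u_{\infty,0}$, which by uniqueness must be $u_\infty$; hence the full sequence converges. The main obstacle is the middle step — verifying that the frequency-truncated nonlinearities $P_{\le N_n}F(P_{\le N_n}u_n)$ converge to $F(u_\infty)$ in a space where one may apply the Strichartz estimate, which requires combining the strong $L^4_{t,x}$ convergence, the uniform $L^4_tL^\infty_x$ bounds, and the vanishing of the high-frequency tails in a single careful estimate — but this is precisely the subcritical simplification the authors advertise, and no concentration-compactness is needed.
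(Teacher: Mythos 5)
Your overall strategy and most of the individual steps track the paper's proof closely: the subsequence reduction, the uniform Strichartz bounds, the extraction of a strong local limit $v$ via Proposition~\ref{P:compact}, the identification of $v$ as a solution to \eqref{nls} by passing to the limit in the Duhamel formula, the extension from a.e.\ $t$ to all $t$, and the final subsequence-of-subsequences argument are all in the right place. There are two minor stylistic differences --- you use equicontinuity of $t\mapsto\langle\phi,u_n(t)\rangle$ to fill in the exceptional times, whereas the paper simply observes that the right-hand side of the limiting Duhamel formula is continuous in $t$ with values in $L^2(\R)$; and you argue via $C^\infty_c$ test functions while the paper works directly with $\psi\in L^2$ --- but these are equivalent.

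However, there is one genuine gap, and it is precisely the step you flagged as the ``main obstacle.'' You claim that $P_{\le N_n}F(P_{\le N_n}u_n)\to F(v)$ in the dual Strichartz space $N([-T,T]\times\R)$. This is a \emph{global-in-space} statement, and it does not follow from the local-in-space convergence supplied by Proposition~\ref{P:compact}. In fact it is false in general: a sequence $u_n(0)$ converging weakly but not strongly in $L^2$ (say, a fixed bump translated to spatial infinity) produces solutions whose nonlinearities do not converge in any global space-time norm, even though the local $L^p_{t,x}$ limit $v$ is still zero. Your difference-of-cubes estimate would require a bound like $\|P_{\le N_n}u_n-v\|_{L^6_{t,x}([-T,T]\times\R)}\to 0$ or $\|P_{\le N_n}u_n-v\|_{L^\infty_tL^2_x}\to 0$, neither of which is available; weak $L^2$ convergence of the data only gives bounds, not smallness, for the full space-time norm of the difference. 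Consequently, when you ``pass to the limit in the Duhamel integral using the $N$-space convergence just established,'' you are invoking a convergence that was never established.

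What is actually needed --- and what the paper supplies as Part~2 of its verification of \eqref{v du hamel main} --- is a tightness argument. After testing against $\psi$ and undoing the propagator, one is estimating $\int_0^t\langle e^{-i(t-s)\Delta}\psi,\,[\text{nonlinearity difference}]\rangle\,ds$. One splits the spatial integral into $\{|x|\le R\}$ and $\{|x|>R\}$. On the far field, one \emph{cannot} use compactness of the $u_n$'s; instead one uses that $\|\chi_R^c\, e^{it\Delta}\psi\|_{L^6_{t,x}([-T,T]\times\R)}\to 0$ as $R\to\infty$ by Strichartz and dominated convergence, so this contribution is small \emph{uniformly in $n$}. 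Only on the near field does the strong local convergence enter (and there one also needs the Schur-test bound $\|\chi_R P_{\le N_n}\chi_{2R}^c\|_{L^p\to L^p}\lesssim (N_nR)^{-\beta}$ to reconcile the frequency cutoff with the spatial cutoff, which is the paper's Part~3). Your proof needs this near/far split inserted exactly where you invoke the nonexistent global $N$-norm convergence; with it, the rest of your argument goes through as written.
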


\begin{proof}
It suffices to verify \eqref{E:P:weak wp} along a subsequence; moreover, we may restrict our attention to a fixed (but arbitrary) time window $[-T,T]$.  Except where indicated otherwise, all space-time norms in this proof will be taken over the slab $[-T,T]\times\R$.

Weak convergence of the initial data guarantees that this sequence is bounded in $L^2(\R)$ and so by Lemma~\ref{lm:loc} we have
\begin{equation}\label{apb}
\sup_n \ \| u_n \|_{L^\infty_t L^2_x} + \| u_n \|_{L^6_{t,x}} \lesssim 1.
\end{equation}
The implicit constant here depends on $T$ and the uniform bound on $u_n(0)$ in $L^2(\R)$.  Such dependence will be tacitly permitted in all that follows.

Passing to a subsequence, we may assume that \eqref{E:P:compact} holds for some $v$, all $R>0$, and our chosen $T$.  This follows from Proposition~\ref{P:compact}.  Combining \eqref{apb} and \eqref{E:P:compact} yields
\begin{equation}\label{apb'}
\| v \|_{L^\infty_t L^2_x} + \| v \|_{L^6_{t,x}} \lesssim 1.
\end{equation}
Moreover, as $L^2(\R)$ admits a countable dense collection of $C^\infty_c$ functions, \eqref{E:P:compact} guarantees that
\begin{equation}\label{tinOmega}
u_n(t) \rightharpoonup v(t) \quad\text{weakly in $L^2(\R)$ for all $t\in\Omega$},
\end{equation}
where $\Omega\subseteq[-T,T]$ is of full measure.

We now wish to take weak limits on both sides of the Duhamel formula
\begin{equation*}
   u_n(t) = e^{it\Delta} u_n(0) - i \int_0^t e^{i(t-s)\Delta} P_{\leq N_n} F\bigl(P_{\leq N_n} u_n(s)\bigr)\,ds.
\end{equation*}
Clearly, $e^{it\Delta} u_n(0)\rightharpoonup e^{it\Delta} u_{\infty,0}$ weakly in $L^2(\R)$.  We also claim that
\begin{equation}\label{v du hamel main}
\wlim_{n\to\infty} \int_0^t e^{i(t-s)\Delta} P_{\leq N_n} F\bigl(P_{\leq N_n} u_n(s)\bigr)\,ds = \int_0^t e^{i(t-s)\Delta} F\bigl(v(s)\bigr)\,ds,
\end{equation}
where the weak limit is with respect to the $L^2(\R)$ topology.  This assertion will be justified later.  Taking this for granted for now, we deduce that
\begin{equation}\label{v du hamel}
  \wlim_{n\to\infty}  u_n(t) = e^{it\Delta} u_\infty(0) - i \int_0^t e^{i(t-s)\Delta} F(v(s))\,ds.
\end{equation}
Moreover, we observe that RHS\eqref{v du hamel} is continuous in $t$, with values in $L^2(\R)$, and that LHS\eqref{v du hamel} agrees with $v(t)$ for almost every $t$.  Correspondingly, after altering $v$ on a space-time set of measure zero, we obtain $v\in C([-T,T];L^2(\R))$ that still obeys \eqref{apb'} but now also obeys 
\begin{equation}\label{v du hamel'}
  v(t) = e^{it\Delta} u_\infty(0) - i \int_0^t e^{i(t-s)\Delta} F(v(s))\,ds \qtq{and} \wlim_{n\to\infty} u_n(t) = v(t) ,
\end{equation}
for \emph{all} $t\in[-T,T]$.  By Definition~\ref{D:solution} and Lemma~\ref{lm:loc}, we deduce that $v=u_\infty$ on $[-T,T]$ and that \eqref{E:P:weak wp} holds for $t\in [-T,T]$.

To complete the proof of Theorem~\ref{T:weak wp}, it remains only to justify \eqref{v du hamel main}.   To this end, let us fix $\psi\in L^2(\R)$.  We will divide our task into three parts.

Part 1:  By H\"older's inequality, \eqref{apb}, and the dominated convergence theorem, 
\begin{align*}
\biggl| \biggl\langle \psi,&\  \int_0^t e^{i(t-s)\Delta} \bigl[ F\bigl(P_{\leq N_n} u_n(s)\bigr) - P_{\leq N_n} F\bigl(P_{\leq N_n} u_n(s)\bigr) \bigr]\,ds \biggr\rangle \biggr| \\
&\leq  \biggl|  \int_0^t \ \Bigl\langle e^{-i(t-s)\Delta} P_{>N_n}\psi,\ F\bigl(P_{\leq N_n} u_n(s)\bigr) \Bigr\rangle \,ds\, \biggr| \\
&\leq \sqrt{T} \| P_{>N_n}\psi \|_{L^2_x} \| u_n \|_{L^6_{t,x}}^3 = o(1) \quad\text{as $n\to\infty$.}
\end{align*}

Part 2:  Let $\chi_R$ denote the indicator function of $[-R,R]$ and let $\chi_R^c$ denote the indicator of the complementary set.  Arguing much the same as for part 1, we have
\begin{align*}
\sup_n \biggl| \biggl\langle \psi,&
	\  \int_0^t e^{i(t-s)\Delta} \chi_R^c \bigl[ F\bigl(P_{\leq N_n} u_n(s)\bigr) - F\bigl(v(s)\bigr) \bigr]\,ds \biggr\rangle \biggr| \\
 &\leq T^{1/2} \| \chi_R^c e^{it \Delta} \psi \|_{L_{t,x}^6} \Bigl\{ \| v \|_{L^6_{t,x}}^2\|v\|_{L_t^\infty L_x^2} + \sup_n \| u_n \|_{L^6_{t,x}}^2\|u_n\|_{L_t^\infty L_x^2} \Bigr\} \\
 &= o(1) \quad\text{as $R\to\infty$.}
\end{align*}

Part 3: An easy application of Schur's test shows that
$$
\| \chi_R P_{\leq N_n} \chi_{2R}^c f \|_{L^p(\R)} \lesssim_\beta (N_nR)^{-\beta} \|  f \|_{L^p(\R)}
$$
for any $1\leq p\leq \infty$ and any $\beta>0$.  Correspondingly,
\begin{align*}
\bigl\| \chi_R P_{\leq N_n}  (u_n - v)  \bigr\|_{L^6_{t,x}} \lesssim \bigl\| \chi_{2R} (u_n - v) \bigr\|_{L^6_{t,x}}
	+ (N_nR)^{-\beta} \bigl\{ \| u_n\|_{L^6_{t,x}} + \| v \|_{L^6_{t,x}} \bigr\}.
\end{align*}

Using this estimate together with \eqref{apb}, \eqref{apb'}, \eqref{E:P:compact}, and the fact that $N_n\to\infty$, we deduce that
\begin{align*}
\bigl\| \chi_R \bigl[ (P_{\leq N_n}  u_n) - v \bigr] \bigr\|_{L^6_{t,x}}
	&\lesssim \bigl\| \chi_R P_{\leq N_n}  (u_n - v) \bigr] \bigr\|_{L^6_{t,x}} + \bigl\| P_{> N_n} v \bigr\|_{L^6_{t,x}} = o(1)
\end{align*}
as $n\to\infty$.  From this, \eqref{apb}, and \eqref{apb'}, we then easily deduce that 
\begin{align*}
&\limsup_{n\to\infty}\, \biggl| \biggl\langle \psi,
	\  \int_0^t e^{i(t-s)\Delta} \chi_R \bigl[ F\bigl(P_{\leq N_n} u_n(s)\bigr) - F\bigl(v(s)\bigr) \bigr]\,ds \biggr\rangle \biggr| \\
&\ \ \ \lesssim T^{1/2} \| \psi \|_{2}
	\limsup_{n\to\infty} \, \bigl\| \chi_R\bigl[ (P_{\leq N_n}  u_n) - v \bigr] \bigr\|_{L^6_{t,x}} \Bigl\{ \| v \|_{L^6_{t,x}}^2 + \| u_n \|_{L^6_{t,x}}^2 \Bigr\} = 0.
\end{align*}

Combining all three parts proves \eqref{v du hamel main} and so completes the proof of Theorem~\ref{T:weak wp}.
\end{proof}

\section{Finite-dimensional approximation}\label{S:5}

As mentioned in the introduction, to prove the non-squeezing result for the cubic NLS on the line we will prove that solutions to this equation are well approximated by solutions to a finite-dimensional Hamiltonian system.  As an intermediate step, in this section, we will prove that solutions to the frequency-localized cubic Schr\"odinger equation on the line are well approximated by solutions to the same equation on ever larger tori; see Theorem~\ref{thm:app} below.

To do this, we will need a perturbation theory for the frequency-localized cubic NLS on the torus, which in turn relies on suitable Strichartz estimates for the linear propagator.   In Lemma~\ref{lm:stri} below we exploit the observation that with a suitable inter-relation between the frequency cut-off and the torus size, one may obtain the full range of mass-critical Strichartz estimates in our setting.  We would like to note that other approaches to Strichartz estimates on the torus \cite{Bourg:1993,BourgainDemeter} are not well suited to the scenario considered here --- they give bounds on unnecessarily long time intervals, so long in fact, that the bounds diverge as the circumference of the circle diverges.

\subsection{Strichartz estimates and perturbation theory on the torus}

Arguing as in \cite[\S7]{KVZ:nsqz2d} one readily obtains frequency-localized finite-time  $L^1\to L^\infty$ dispersive estimates on the torus $\T_L:=\R/L\Z$, provided the circumference $L$ is sufficiently large.  This then yields Strichartz estimates in the usual fashion:

\begin{lem}[Torus Strichartz estimates, \cite{KVZ:nsqz2d}]\label{lm:stri}
Given $T>0$ and $1\leq N\in 2^\Z$, there exists $L_0=L_0(T,N)\geq 1$ sufficiently large so that for $L\ge L_0$,
\begin{gather*}
\|P_{\le N}^L u\|_{S([-T,T]\times\T_L)}\lsm_{q,r} \|u(0)\|_{L^2(\T_L)} + \|(i\partial_t+\Delta)u\|_{N([-T,T]\times\T_L)}.
\end{gather*}
Here, $P_{\le N}^L$ denotes the Fourier multiplier on $\T_L$ with symbol $m_{\leq 1}(\cdot / N)$.
\end{lem}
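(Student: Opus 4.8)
The plan is to obtain the estimate of Lemma~\ref{lm:stri} by the classical route from dispersion to Strichartz: first establish a frequency-localized fixed-time $L^1\to L^\infty$ dispersive estimate on $\T_L$ that holds uniformly for $L\ge L_0(T,N)$ on the time window $|t|\le 2T$, and then run the standard $TT^*$ argument, $TT^*$-duality, the Hardy--Littlewood--Sobolev inequality, and the Christ--Kiselev lemma. The only place in which the torus --- and the hypothesis that $L$ be large --- enters is the dispersive bound; everything downstream of it is insensitive to whether the spatial domain is $\R$ or $\T_L$.

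For the dispersive estimate, note that the convolution kernel of $P_{\le N}^L\prr$ on $\T_L$ is the $L\Z$-periodization of the kernel
$$K_N(t,x)=\int_\R m_{\le 1}(\xi/N)\,e^{i(x\xi-t\xi^2)}\,d\xi$$
of $P_{\le N}\prr$ on the line. Stationary phase gives $|K_N(t,x)|\lsm|t|^{-1/2}$ with a constant depending only on $m_{\le 1}$ (in particular, uniform in $N$), while repeated integration by parts in $\xi$ shows that $K_N(t,x)$ is rapidly decaying on the non-stationary region: $|K_N(t,x)|\lsm_k N(N|x|)^{-k}$ once $|x|\gtrsim N|t|$, for every $k$. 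Writing a point of $\T_L$ as $|x|\le L/2$, every image $x+mL$ with $m\ne 0$ has $|x+mL|\ge(|m|-\tfrac12)L$, so if $L_0(T,N)$ is chosen large enough that $L_0\gtrsim NT$, then on $|t|\le 2T$ the entire tail of the periodization falls in the rapid-decay region and a short computation gives $\sum_{m\ne 0}|K_N(t,x+mL)|\lsm|t|^{-1/2}$ for $L\ge L_0$. Hence the torus kernel obeys the same fixed-time bound $\|P_{\le N}^L\prr\|_{L^1(\T_L)\to L^\infty(\T_L)}\lsm|t|^{-1/2}$ for $0<|t|\le 2T$, with a constant independent of $N$ and $L$. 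This is the step we would carry out as in \cite[\S7]{KVZ:nsqz2d}.

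From here the derivation is routine. Interpolating the dispersive bound against the $L^2$ conservation of $\prr$ and the uniform $L^p(\T_L)$-boundedness of $P_{\le N}^L$ gives $\|P_{\le N}^L\prr f\|_{L^r(\T_L)}\lsm|t|^{-\frac12(1-\frac2r)}\|f\|_{L^{r'}(\T_L)}$ for $2\le r\le\infty$ and $|t|\le 2T$. A $TT^*$ argument then produces the homogeneous endpoint estimate $\|P_{\le N}^L\prr f\|_{\ltx{4}{\infty}([-T,T]\times\T_L)}\lsm\|f\|_{L^2(\T_L)}$: the operator $F\mapsto\int_{-T}^{T}e^{i(t-s)\Delta}(P_{\le N}^L)^2F(s)\,ds$ maps $\ltx{4/3}{1}$ to $\ltx{4}{\infty}$ by the dispersive bound followed by Hardy--Littlewood--Sobolev in the time variable, whose exponent $\tfrac12$ is strictly between $0$ and $1$ with all integrability indices strictly between $1$ and $\infty$ --- so no $\R^2$-type endpoint obstruction arises, and this is simply the pre-Keel--Tao Strichartz argument. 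Dualizing gives $\|\int_{-T}^{T}e^{-is\Delta}P_{\le N}^L G(s)\,ds\|_{L^2(\T_L)}\lsm\|G\|_{\ltx{4/3}{1}}$, and the corresponding estimates with $\int_0^t$ in place of $\int_{-T}^{T}$ follow from the Christ--Kiselev lemma, applicable because in each case the two time-integrability exponents are distinct. Finally, applying $P_{\le N}^L$ to the Duhamel formula for $u$, commuting it past $\prr$, splitting $(i\partial_t+\Delta)u=G_1+G_2$ according to the definition of the $N$-norm, and pairing the homogeneous estimates above with their duals across the four combinations of an $S$-component with an $N$-component yields the stated bound.

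I expect the dispersive estimate to be the crux. The free Schr\"odinger flow does not disperse on a fixed torus --- it exhibits revivals --- and the whole point is that the truncation to $|\xi|\le N$ caps the classical propagation speed at $O(N)$, so on the window $|t|\le 2T$ the solution cannot wind around a torus of circumference $L\gg NT$. Turning this heuristic into a quantitative statement --- pinning down how large $L_0$ must be relative to $T$ and $N$ so that the method-of-images tails are genuinely negligible with a constant that does not deteriorate --- is the one substantive step, and it is precisely what \cite[\S7]{KVZ:nsqz2d} provides. The passage from the dispersive estimate to Strichartz, by contrast, is identical to the Euclidean case.
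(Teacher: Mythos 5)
Your proposal follows the same route as the paper: the paper's entire proof is the sentence preceding the lemma, which cites \cite[\S7]{KVZ:nsqz2d} for the frequency-localized, finite-time $L^1\to L^\infty$ dispersive estimate on $\T_L$ (obtained by periodizing the line kernel, using that the cutoff to $|\xi|\lesssim N$ gives both the $|t|^{-1/2}$ stationary-phase bound and rapid decay of the images when $L\gg NT$), and then asserts that the Strichartz estimates follow ``in the usual fashion.'' Your write-up supplies the details of both steps --- the method-of-images/stationary-phase analysis for the dispersive bound, and the standard $TT^*$/Hardy--Littlewood--Sobolev/Christ--Kiselev machinery for the passage to Strichartz --- and both are correct, so it is the same argument, merely unabridged.
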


Using these Strichartz estimates one then obtains (in the usual manner) a stability theory for the following frequency-localized NLS on the torus $\T_L$:
\begin{align}\label{stabt:1}
\begin{cases}
(i\partial_t+\Delta )u =P_{\le N}^ L  F(P_{\le N}^L u),\\
u(0)=P_{\le N}^L u_0.
\end{cases}
\end{align}

\begin{lem}[Perturbation theory for \eqref{stabt:1}]\label{lm:stab} Given $T>0$ and $1\leq N\in 2^\Z$, let $L_0$ be as in Lemma~\ref{lm:stri}. Fix $L\geq L_0$ and let $\tilde u$ be an approximate solution to \eqref{stabt:1} on $[-T,T]$ in the sense that
\begin{align*}
\begin{cases}
(i\partial_t+\Delta) \tilde u=P_{\le N}^L F(P_{\le N}^L \tilde u) +e, \\
\tilde u(0)=P_{\le N}^L \tilde u_0
\end{cases}
\end{align*}
for some function $e$ and $\tilde u_0\in L^2(\T_L)$. Assume
\begin{align*}
\|\tilde u\|_{L_t^\infty L_x^2([-T,T]\times\T_L)}\le M
\end{align*}
and the smallness conditions
\begin{align*}
\|u_0-\tilde u_0\|_{L^2(\T_L)} \le \eps \qtq{and} \|e\|_{N([-T,T]\times\T_L)}\le \eps.
\end{align*}
Then if $\eps\le \eps_0(M,T)$, there exists a unique solution $u$ to \eqref{stabt:1} such that
\begin{align*}
\|u-\tilde u\|_{S([-T,T]\times\T_L)}\le C(M,T) \eps.
\end{align*}
\end{lem}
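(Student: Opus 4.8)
The plan is to run the standard long-time perturbation (``stability'') argument for a mass-subcritical NLS, with the torus Strichartz inequality of Lemma~\ref{lm:stri} in place of its Euclidean analogue. Throughout I write $P:=P_{\le N}^L$ and $I:=[-T,T]$. The estimate that drives everything is the following gain in the nonlinear term: from $|F(a)-F(b)|\lesssim(|a|^2+|b|^2)|a-b|$, H\"older in space (writing $L^1_x$ as $L^4_x\cdot L^4_x\cdot L^2_x$) and in time (writing $L^{4/3}_t$ as $L^8_t\cdot L^8_t\cdot L^2_t$), the boundedness of $P$ on Lebesgue spaces, the interpolation embedding $\|g\|_{L^8_tL^4_x}\lesssim\|g\|_{L^\infty_tL^2_x}^{1/2}\|g\|_{L^4_tL^\infty_x}^{1/2}\le\|g\|_{S}$, and the crude bound $\|h\|_{L^2_{t,x}(J\times\T_L)}\le|J|^{1/2}\|h\|_{L^\infty_tL^2_x(J\times\T_L)}$, one obtains for every subinterval $J\subseteq I$ and all $f,g$:
\begin{align*}
\bigl\|F(Pf)-F(Pg)\bigr\|_{N(J\times\T_L)}\ \lesssim\ |J|^{\frac12}\bigl(\|Pf\|_{S(J\times\T_L)}^2+\|Pg\|_{S(J\times\T_L)}^2\bigr)\,\|f-g\|_{L^\infty_tL^2_x(J\times\T_L)}.
\end{align*}
The crucial features here are the positive power $|J|^{1/2}$ and the complete absence of any power of $N$ or $L$; the implicit constant is absolute.

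The first step is the a priori bound $\|\tilde u\|_{S(I\times\T_L)}\lesssim_{M,T}1$. Inserting the estimate above with $f=\tilde u$, $g=0$ (and $\|\tilde u\|_{L^\infty_tL^2_x}\le M$) together with $\|e\|_{N}\le\eps$ into Lemma~\ref{lm:stri} over a subinterval $J$ yields $\|\tilde u\|_{S(J\times\T_L)}\le C(M+\eps+|J|^{1/2}M\|\tilde u\|_{S(J\times\T_L)}^2)$, whereupon a routine continuity/bootstrap argument gives $\|\tilde u\|_{S(J\times\T_L)}\lesssim M$ once $|J|\le c(M)$. Summing over a partition of $I$ into $J_1=J_1(M,T)$ such intervals, and recalling $\|\tilde u\|_{L^\infty_tL^2_x(I)}\le M$, produces $\|\tilde u\|_{S(I\times\T_L)}\le M_*=M_*(M,T)$. (Here one may and should take $\tilde u$ and $e$ supported at frequencies $|\xi|\le N$, as is the case in our application; this makes Lemma~\ref{lm:stri} directly applicable and ensures the norms above are finite to begin with.)

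Next, let $u$ be the solution to \eqref{stabt:1} provided by the global well-posedness theory for that equation (itself routine from Lemma~\ref{lm:stri}), and set $w:=u-\tilde u$, so that $(i\partial_t+\Delta)w=P[F(Pu)-F(P\tilde u)]-e$ with $\|w(0)\|_{L^2(\T_L)}\le\eps$. Partition $I$ into $J=J(M,T)$ consecutive intervals $I_1,\dots,I_J$ of equal length $2T/J$, chosen short enough in terms of $M_*$ that the nonlinear contribution in the estimate below can be absorbed. On the interval $I_1$ containing $0$, Lemma~\ref{lm:stri}, the displayed nonlinear estimate, and $\|Pu\|_{S(I_1)}\le\|P\tilde u\|_{S(I_1)}+\|Pw\|_{S(I_1)}\le M_*+\|w\|_{S(I_1)}$ give
\begin{align*}
\|w\|_{S(I_1\times\T_L)}\ \le\ C\eps+C|I_1|^{\frac12}\bigl(M_*^2+\|w\|_{S(I_1\times\T_L)}^2\bigr)\|w\|_{S(I_1\times\T_L)}.
\end{align*}
Under the bootstrap hypothesis $\|w\|_{S(I_1\times\T_L)}\le M_*$ the quadratic factor is $\lesssim M_*^2$, so by the choice of $|I_1|$ this closes to $\|w\|_{S(I_1\times\T_L)}\le 2C\eps$, consistent with the hypothesis provided $\eps\le M_*/(2C)$; the standard continuity argument (using that $t\mapsto\|w\|_{S([0,t]\times\T_L)}$ is continuous and equals $\|w(0)\|_{L^2}\le\eps$ at $t=0$) then promotes this to an unconditional bound. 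Iterating over the remaining intervals outward from $I_1$, with endpoint data $\|w(t_{j-1})\|_{L^2}\le\|w\|_{S(I_{j-1}\times\T_L)}$, yields $\|w\|_{S(I_j\times\T_L)}\le C_j\,\eps$ with $C_j$ depending only on $j$ and the absolute constant, valid so long as $C_J\eps\le M_*$, i.e.\ $\eps\le\eps_0(M,T)$. Summing over $j$ gives $\|u-\tilde u\|_{S(I\times\T_L)}\le\bigl(\sum_{j\le J}C_j\bigr)\eps=:C(M,T)\,\eps$. Uniqueness of $u$ follows from the same computation applied to the difference of two solutions (taking $e=0$).

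I do not expect a genuine analytic obstacle: once Lemma~\ref{lm:stri} is in hand, the proof is a routine finite iteration of the Strichartz/energy estimate, the mass-subcriticality supplying the favorable power $|J|^{1/2}$ that makes each bootstrap close. The one point that needs watching --- and the reason this is so much easier than the mass-critical case --- is the bookkeeping verifying that the number of subintervals $J$, the threshold $\eps_0$, and the constant $C(M,T)$ depend only on $M$ and $T$, and in particular are uniform in the circumference $L\ge L_0(T,N)$ and in the cutoff $N$. That uniformity is precisely what is needed to pass to the limit $L\to\infty$ (and later $N\to\infty$) in the proof of Theorem~\ref{thm:app}.
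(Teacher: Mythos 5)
The paper gives no proof of this lemma, declaring that it follows ``in the usual manner'' from Lemma~\ref{lm:stri}; your argument is precisely that standard subcritical long-time perturbation argument (a short-interval bootstrap powered by the $|J|^{1/2}$ gain from mass-subcriticality, iterated $J(M,T)$ times), and it is correct. The caveat you flag --- that Lemma~\ref{lm:stri} only controls $P_{\le N}^L$ applied to the unknown, so one must either assume $\tilde u$ and $e$ are frequency-localized or run the argument on $P_{\le 2N}^L\tilde u$ --- is the one genuine subtlety the paper glosses over, and you are right that it is harmless in the application to Theorem~\ref{thm:app}.
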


\subsection{Approximation by finite-dimensional PDE}

Fix $M>0$, $T>0$, and $\eta_n\to 0$.  Let $N_n\to \infty$ be given and let $L_n=L_n(M,T, N_n, \eta_n)$ be large constants to be chosen later; in particular, we will have $L_n\to \infty$.  Let $\btn:=\R/L_n\Z$ and let
\begin{align}\label{1241}
u_{0, n}\in \mathcal H_n:=\bigl\{f\in L^2(\btn):\,P_{>2N_n}^{L_n} f=0 \bigr\} \qtq{with} \|u_{0,n}\|_{L^2(\btn)}\le M.
\end{align}

Consider the following finite-dimensional Hamiltonian systems:
\begin{align}\label{per1}
\begin{cases}
(i\partial_t+\Delta)u_n =\pml F(\pml u_n), \qquad (t,x)\in\R\times\btn,\\
u_n(0)=u_{0,n}.
\end{cases}
\end{align}
We will show that for $n$ sufficiently large, solutions to \eqref{per1} can be well approximated by solutions to the corresponding problem posed on $\R$ on the fixed time interval $[-T,T]$.  Note that as a finite-dimensional system with a coercive Hamiltonian, \eqref{per1} automatically has global solutions.

To continue, we subdivide the interval $[\frac{L_n}4,\frac{L_n}2]$ into at least $16M^2/\eta_n$ many subintervals of length $20\frac 1{\eta_n} N_n T$.  This can be achieved so long as
\begin{align}\label{cf:1}
L_n\gg \tfrac{M^2}{\eta_n} \cdot \tfrac 1{\eta_n} N_nT.
\end{align}
By the pigeonhole principle, there exists one such subinterval, which we denote by
\begin{align*}
I_n:=[c_n-\tfrac {10}{\eta_n} N_n T, c_n+\tfrac {10}{\eta_n}N_nT],
\end{align*}
such that
\begin{align*}
\|u_{0,n}\chi_{I_n}\|_2\le \tfrac 14 \eta_n ^{1/2}.
\end{align*}
For $0\leq j\leq 4$, let $\chi_{n}^j:\R\to [0,1]$ be smooth cutoff functions adapted to $I_n$ such that
\begin{align*}
\chi_{n}^j(x)=\begin{cases}1, & x\in [c_n-L_n+\tfrac {10-2j}{\eta_n}N_nT, c_n-\tfrac {10-2j}{\eta_n}N_nT],\\
0, & x\in (-\infty, c_n-L_n+\tfrac {10-2j-1}{\eta_n}N_nT)\cup(c_n-\tfrac {10-2j-1}{\eta_n}N_nT,\infty).
\end{cases}
\end{align*}

The following properties of $\chi_n^j$ follow directly from the construction above:
\begin{align}\label{cf:1.0}
\left\{\quad\begin{gathered}
\chi_n^j\chi_n^i =\chi_n^j \quad\text{for  all $0\leq j<i\leq 4$,} \\
\|\partial_x^k \chi_n^j \|_{L^\infty} = o\bigl( (N_nT)^{-k} \bigr) \quad\text{for each $k\geq 0$}, \\
\|(1-\chi_n^j)u_{0,n}\|_{L^2(\T_n)} = o(1) \quad\text{for all $0\le j\le 4$}.
\end{gathered}\right.
\end{align}
Here and subsequently, $o(\cdot)$ refers to the limit as $n\to\infty$.  To handle the frequency truncations appearing in \eqref{per1} and \eqref{lm:1}, we need to control interactions between these cutoffs and Littlewood--Paley operators.  This is the role of the next lemma. 

\begin{lem}[Littlewood--Paley estimates]\label{L:LP est} For $L_n$ sufficiently large and all $0\leq j\leq 4$, we have the following norm bounds as operators on $L^2$\textup{:}
\begin{gather}
\|\chi_n^j (\pmn-\pml) \chi_n^j \|_{L^2(\R)\to L^2(\R)} =o(1), \label{E:P2P}\\
\|[\chi_n^j, \pml ]\|_{L^2(\btn)\to L^2(\btn)} + \|[\chi_n^j, \pmn]\|_{L^2(\R)\to L^2(\R)} =o(1), \label{E:Pcom}
\end{gather}
as $n\to\infty$.  Moreover, if $i>j$ then
\begin{gather}
\|\chi_n^j \pml (1-\chi_n^i) \|_{L^2(\btn)\to L^2(\btn)} + \|\chi_n^j \pmn (1-\chi_n^i) \|_{L^2(\R)\to L^2(\R)} =o(1). \label{E:Pmis}
\end{gather}
\end{lem}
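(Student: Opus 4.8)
All three operator bounds will follow from Schur's test once the operators are written as integral operators; the one quantitative input is that the transition scale $\tfrac1{\eta_n}N_nT$ of the cutoffs satisfies $\tfrac1{\eta_n}N_nT\cdot N_n=\tfrac{N_n^2T}{\eta_n}\to\infty$, so that a Littlewood--Paley kernel truncated to distances $\gtrsim\tfrac1{\eta_n}N_nT$ is negligible. Write $K_n(z)=N_n\phi(N_nz)$ for the convolution kernel of $\pmn$ on $\R$, where $\phi$ is the (Schwartz) inverse Fourier transform of $m_{\le1}$; then $\int_\R|K_n(z)|\,|z|\,dz\lsm N_n^{-1}$ and, for every $A\ge1$, $\int_{|z|\ge\delta}|K_n(z)|\,dz\lsm_A(N_n\delta)^{-A}$. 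The kernel of $\pml$ on $\btn$ is the periodization $\tilde K_n(z):=\sum_{k\in\Z}K_n(z+kL_n)$; for $L_n$ large the terms $k\ne0$ contribute negligibly, so $\tilde K_n$ obeys the same two bounds with $|z|$ replaced by the distance $d_{\btn}$ to the origin.

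For \eqref{E:Pcom}, the kernel of $[\chi_n^j,\pmn]$ is $K_n(x-y)\bigl(\chi_n^j(x)-\chi_n^j(y)\bigr)$, and by the mean value theorem $|\chi_n^j(x)-\chi_n^j(y)|\le\|\partial_x\chi_n^j\|_{L^\infty}|x-y|$. Hence the row and column integrals are both $\lsm\|\partial_x\chi_n^j\|_{L^\infty}\int|K_n(z)|\,|z|\,dz=o\bigl((N_nT)^{-1}\bigr)\cdot N_n^{-1}=o(1)$, using the second line of \eqref{cf:1.0}, and Schur's test gives the claim; the torus commutator $[\chi_n^j,\pml]$ is handled identically with $d_{\btn}$ in place of $|x-y|$. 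For \eqref{E:Pmis}, one reads off from the explicit formulas defining the $\chi_n^j$ that, when $i>j$, the supports of $\chi_n^j$ and $1-\chi_n^i$ are separated by at least $\tfrac1{\eta_n}N_nT$ — both in $\R$ and, for $L_n$ large (where $1-\chi_n^i$ is supported in a single short arc), in $\btn$. Thus the kernel $\chi_n^j(x)K_n(x-y)\bigl(1-\chi_n^i(y)\bigr)$ of $\chi_n^j\pmn(1-\chi_n^i)$ vanishes unless $|x-y|\ge\tfrac1{\eta_n}N_nT$, so taking $\delta=\tfrac1{\eta_n}N_nT$ in the second kernel bound makes the row and column integrals $\lsm_A(N_n^2T/\eta_n)^{-A}=o(1)$; Schur's test, together with the same argument on $\btn$, finishes this part.

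For \eqref{E:P2P}, note that $\chi_n^j$ is supported in an interval $J_n$ of length $<L_n$ — i.e. inside a fundamental domain of $\btn$ — so $\chi_n^j\pml\chi_n^j$ is unambiguous once $J_n$ is identified with a subset of $\btn$, and a direct computation shows that $\chi_n^j(\pmn-\pml)\chi_n^j$ has kernel $\chi_n^j(x)\bigl[-\sum_{k\ne0}K_n(x-y+kL_n)\bigr]\chi_n^j(y)$. The explicit formulas give $\diam J_n\le L_n-\tfrac2{\eta_n}N_nT$, so for $x,y\in J_n$ and $k\ne0$ one has $|x-y+kL_n|\ge\tfrac2{\eta_n}N_nT$; since moreover the translates $(x-J_n)+kL_n$, $k\ne0$, have bounded overlap, $\int_{J_n}\sum_{k\ne0}|K_n(x-y+kL_n)|\,dy\lsm_A\int_{|z|\ge\frac2{\eta_n}N_nT}|K_n(z)|\,dz\lsm_A(N_n^2T/\eta_n)^{-A}=o(1)$, and likewise for the integral in $x$. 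Schur's test then yields \eqref{E:P2P}.

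The only point requiring genuine care is this last step, the torus/line comparison: one must set things up so that $\pmn$ and $\pml$ act on the same class of functions, and then verify that the geometric separation built into the $\chi_n^j$ forces every periodization tail $k\ne0$ to sit at distance $\gtrsim\tfrac1{\eta_n}N_nT$ from the diagonal. Everything else is a routine application of Schur's test driven by $\tfrac{N_n^2T}{\eta_n}\to\infty$, and no largeness of $L_n$ beyond that already guaranteed by \eqref{cf:1} is needed — in particular \eqref{cf:1} forces each $\supp\chi_n^j$ into a single fundamental domain.
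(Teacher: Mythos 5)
Your proof is correct, and for \eqref{E:Pcom} it coincides with the paper's (Schur's test on the commutator kernel, giving $\lesssim N_n^{-1}\|\partial_x\chi_n^j\|_{L^\infty}=o(1)$). Where you genuinely diverge is on the other two bounds. For \eqref{E:Pmis}, the paper does not re-examine kernels at all: since $i>j$ implies $\chi_n^j(1-\chi_n^i)=0$, one has the algebraic identity $\chi_n^j\,\pmn(1-\chi_n^i)=[\chi_n^j,\pmn](1-\chi_n^i)$ (and likewise on $\btn$), so \eqref{E:Pmis} is an immediate corollary of \eqref{E:Pcom} — slicker than your direct support-separation estimate, though yours is perfectly sound and arguably makes the geometric mechanism more visible. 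For \eqref{E:P2P}, the paper only sketches the idea (compare a Fourier sum to a Fourier integral, then Schur) and defers the details to \cite[\S8]{KVZ:nsqz2d}; your argument is a self-contained alternative that uses Poisson summation to write the torus kernel as the periodization $\tilde K_n(z)=\sum_k K_n(z+kL_n)$, so the difference $\chi_n^j(\pmn-\pml)\chi_n^j$ has kernel supported only on the periodization tails $k\neq0$, which sit at distance $\geq\tfrac{2}{\eta_n}N_nT$ from the diagonal by your correct computation of $\diam\supp\chi_n^j$. This is a clean and efficient route; the paper's cited approach bounds the Riemann-sum error in the frequency variable instead, which is the Fourier-dual picture and requires a bit more bookkeeping. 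One small caveat worth stating explicitly (you only gesture at it): the Lipschitz bound $|\chi_n^j(x)-\chi_n^j(y)|\leq\|\partial_x\chi_n^j\|_{L^\infty}d_{\btn}(x,y)$ on the torus is legitimate precisely because $\supp\chi_n^j$ fits inside a fundamental domain, so the periodized function is still globally $C^1$ on $\btn$ with the same derivative bound.
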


\begin{proof}
The proof of \eqref{E:P2P} is somewhat involved; one must estimate the difference between a Fourier sum and integral and then apply Schur's test.  See \cite[\S8]{KVZ:nsqz2d} for details.

The estimate \eqref{E:Pcom} follows readily from the rapid decay of the kernels associated to Littlewood--Paley projections on both the line and the circle; specifically, by Schur's test,
\begin{gather*}
\text{LHS\eqref{E:Pcom}} \lesssim N_n^{-1} \|\nabla \chi_n^j\|_{L^\infty_x} =o(1).
\end{gather*}
We may then deduce \eqref{E:Pmis} from this.  Indeed, as $\chi_n^j\chi_n^i=\chi_n^j$ we have
\begin{gather*}
\chi_n^j \pmn (1-\chi_n^i)  = [\chi_n^j, \pmn] (1-\chi_n^i),
\end{gather*}
and analogously on the torus.
\end{proof}

Now let $\tun$ denote the solution to
\begin{align}\label{lm:1}
\begin{cases}
(i\partial_t+\Delta)\tun=\pmn F(\pmn\tun),\qquad (t,x)\in\R\times\R,\\
\tun(0,x)=\cha(x) u_{0,n}(x+L_n\Z),
\end{cases}
\end{align}
where $u_{0,n}\in L^2(\btn)$ is as in \eqref{1241}.  It follows from Lemma~\ref{lm:loc} that these solutions are global and, moreover, that they obey
\begin{align}\label{c1}
\|\partial_x^k \tun \|_{S([-T,T]\times\R)}\lsm_T M N_n^k
\end{align}
uniformly in $n$ and $k\in\{0,1\}$.  By providing control on the derivative of $\tun$, this estimate also controls the transportation of mass:

\begin{lem}[Mass localization for $\tun$]\label{lm:sm}
Let $\tun$ be the solution to \eqref{lm:1} as above. Then for every $0\leq j\leq 4$ we have
\begin{align*}
\|(1-\chi_n^j)\tun\|_{L_t^\infty L_x^2([-T,T]\times\R)} =o(1) \quad\text{as $n\to\infty$}.
\end{align*}
\end{lem}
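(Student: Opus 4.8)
I would first reduce the problem to the single case $j=0$. Indeed, for $1\le j\le 4$ the relation $\chi_n^0\chi_n^j=\chi_n^0$ from \eqref{cf:1.0} gives $(1-\chi_n^j)(1-\chi_n^0)=1-\chi_n^j$, so that $|(1-\chi_n^j)\tun(t,x)|\le|(1-\chi_n^0)\tun(t,x)|$ pointwise; hence the general bound follows once we know $\|(1-\chi_n^0)\tun\|_{L_t^\infty L_x^2([-T,T]\times\R)}=o(1)$. Writing $\phi:=1-\chi_n^0$, two facts are immediate: since $\tun(0)=\chi_n^0\,u_{0,n}(\cdot+L_n\Z)$ and $0\le\chi_n^0\le1$, the third line of \eqref{cf:1.0} gives $\|\phi\,\tun(0)\|_{L^2(\R)}\le\|(1-\chi_n^0)u_{0,n}\|_{L^2(\btn)}=o(1)$; and, since \eqref{lm:1} conserves mass, $\|\tun(t)\|_{L^2}\le\|\tun(0)\|_{L^2}\le M$ for all $t$. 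Consequently it suffices to show $\bigl|\frac{d}{dt}\|\phi\,\tun(t)\|_{L^2}^2\bigr|=o(1)$ uniformly for $|t|\le T$, since then integration over $[-T,T]$, together with the bound on $\|\phi\,\tun(0)\|_{L^2}$, finishes the proof.

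To estimate this derivative I would feed \eqref{lm:1} into the standard weighted-mass computation; integrating by parts in $x$ (justified since $\tun\in C_tH^1_x$ by \eqref{c1}, with boundary terms at spatial infinity vanishing) gives
\[
\tfrac{d}{dt}\|\phi\,\tun(t)\|_{L^2}^2 \;=\; 2\,\Im\!\int_\R (\phi^2)'\,\overline{\tun}\,\partial_x\tun\,dx \;-\; 2\,\Im\bigl\langle \phi^2\tun,\ \pmn F(\pmn\tun)\bigr\rangle .
\]
The first term is the Laplacian commutator and is harmless: it is bounded in modulus by $2\|(\phi^2)'\|_{L^\infty}\|\tun(t)\|_{L^2}\|\partial_x\tun(t)\|_{L^2}$, and by \eqref{cf:1.0} we have $\|(\phi^2)'\|_{L^\infty}\le 2\|\partial_x\chi_n^0\|_{L^\infty}=o((N_nT)^{-1})$, while $\|\tun(t)\|_{L^2}\le M$ and $\|\partial_x\tun(t)\|_{L^2}\lsm_T MN_n$ by \eqref{c1}; the product is $o(1)$ uniformly for $|t|\le T$.

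The heart of the matter is the nonlinear term. Moving the self-adjoint projection $\pmn$ onto the other factor and isolating a commutator,
\[
\bigl\langle \phi^2\tun,\ \pmn F(\pmn\tun)\bigr\rangle \;=\; \bigl\langle \phi^2\,\pmn\tun,\ F(\pmn\tun)\bigr\rangle \;+\; \bigl\langle [\pmn,\phi^2]\tun,\ F(\pmn\tun)\bigr\rangle .
\]
The first term on the right equals $\pm\int_\R\phi^2\,|\pmn\tun|^4\,dx$, which is real and so contributes nothing after taking $\Im$; this is precisely where the $\pmn F(\pmn\,\cdot\,)$ form of the nonlinearity is used. For the second term, the Schur's-test argument behind \eqref{E:Pcom} gives, for any bounded Lipschitz multiplier $g$, the bound $\|[\pmn,g]\|_{L^2\to L^2}\lsm N_n^{-1}\|g'\|_{L^\infty}$; applied to $g=\phi^2$ this is $o(N_n^{-2}T^{-1})$. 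Combining with the Bernstein bound $\|F(\pmn\tun(t))\|_{L^2}=\|\pmn\tun(t)\|_{L^6}^3\lsm (N_n^{1/3}M)^3=N_nM^3$, the commutator term is $o(1)$ uniformly for $|t|\le T$. Putting the pieces together yields $\bigl|\frac{d}{dt}\|\phi\,\tun(t)\|_{L^2}^2\bigr|=o(1)$ uniformly on $[-T,T]$, and hence the lemma.

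The step I expect to be the main obstacle is the treatment of the nonlinear term: one must recognize that transferring the frequency cutoff produces a \emph{real} leading contribution — invisible to the evolution of the mass — plus a commutator remainder whose smallness hinges on the slow spatial variation of the cutoffs. Concretely, the factor $\|(\phi^2)'\|_{L^\infty}=o((N_nT)^{-1})$ in the commutator estimate is exactly what absorbs the full power of $N_n$ lost to Bernstein's inequality (and, in the linear term, the growth $\|\partial_x\tun\|_{L^2}\sim MN_n$ from \eqref{c1}); this is the only place the geometry of the $\chi_n^j$'s — in particular the scale separation built into the construction of $I_n$ — enters nontrivially.
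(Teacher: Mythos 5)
Your proof is correct and follows essentially the same route as the paper: compute $\frac{d}{dt}\|(1-\chi_n^j)\tun\|_{L^2}^2$, integrate by parts to isolate the gradient term, transfer the self-adjoint projection $\pmn$ to produce a real leading nonlinear contribution plus a commutator $[\pmn,(1-\chi_n^j)^2]$, and then close using \eqref{cf:1.0}, \eqref{c1}, and the quantitative Schur bound underlying \eqref{E:Pcom}. The paper records exactly this identity and cites the same three ingredients. Your preliminary reduction to $j=0$ via $(1-\chi_n^j)(1-\chi_n^0)=1-\chi_n^j$ and your use of a pointwise-in-time Bernstein estimate on $\|F(\pmn\tun(t))\|_{L^2_x}$ (as opposed to an $L^6_{t,x}$ Strichartz bound after time integration) are cosmetic variations and do not change the argument.
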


\begin{proof}
Direct computation (cf. \cite[Lemma~8.4]{KVZ:nsqz2d}) shows
\begin{align*}
\frac{d\ }{dt} \int_{\R}|1-\chi_n^j(x)|^2 |\tun(t,x)|^2 \,dx&= - 4\Im\int_{\R^2}(1-\chi_n^j)(\nabla\chi_n^j) \overline{\tun} \nabla\tun \,dx \\
&\quad+2\Im\int_{\R^2}F(\pmn \tun)[\pmn,(1-\chi_n^j)^2]\overline{\tun} \,dx.
\end{align*}
From this, the result can then be deduced easily using \eqref{cf:1.0}, \eqref{c1}, and \eqref{E:Pcom}.
\end{proof}

With these preliminaries complete, we now turn our attention to the main goal of this section, namely, to prove the following result:

\begin{thm}[Approximation]\label{thm:app} Fix $M>0$ and $T>0$. Let $N_n\to \infty$ and let $L_n$ be sufficiently large depending on $M, T, N_n$. Assume $u_{0,n}\in \mathcal H_n$ with $\|u_{0,n}\|_{L^2(\btn)} \le M$. Let $u_n$ and $\tun$ be solutions to \eqref{per1} and \eqref{lm:1}, respectively. Then
 \begin{align}\label{pert:2.0}
 \lim_{n\to \infty} \|P_{\le 2N_n}^{L_n}(\chc \tun)-u_n\|_{S([-T,T]\times \btn)}=0.
 \end{align}
 \end{thm}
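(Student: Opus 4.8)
The plan is to deduce Theorem~\ref{thm:app} from the perturbation lemma, Lemma~\ref{lm:stab}, by exhibiting $\tilde w_n := P_{\le 2N_n}^{L_n}(\chc \tun)$ as an approximate solution to the torus equation \eqref{stabt:1} (with $N$ replaced by $2N_n$ and $L$ by $L_n$) whose error is $o(1)$ in the relevant dual Strichartz norm. Here we must regard $\chc \tun$ as a function on $\btn$, which makes sense because $\chc$ is supported in an interval of length $\ll L_n$; the finite speed of propagation built into the frequency truncation, combined with Lemma~\ref{lm:sm}, is what makes the transition between the line and the torus harmless. Since $u_n$ solves \eqref{per1} with $u_n(0)=u_{0,n}$, it suffices by Lemma~\ref{lm:stab} to check (a) the mass bound $\|\tilde w_n\|_{L_t^\infty L_x^2}\lesssim M$, which is immediate from \eqref{c1} and the $L^2$-boundedness of the projection; (b) the initial data are close: $\|\tilde w_n(0)-u_{0,n}\|_{L^2(\btn)}=o(1)$; and (c) the equation error $e_n := (i\partial_t+\Delta)\tilde w_n - \pmll F(\pmll \tilde w_n)$ satisfies $\|e_n\|_{N([-T,T]\times\btn)}=o(1)$.

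For (b), write $\tilde w_n(0) = P_{\le 2N_n}^{L_n}(\chc\, \cha u_{0,n}(\cdot+L_n\Z))$; using $\chc\cha=\cha$ from \eqref{cf:1.0}, the last line of \eqref{cf:1.0} giving $\|(1-\cha)u_{0,n}\|_{L^2(\btn)}=o(1)$, the fact that $u_{0,n}\in\mathcal H_n$ so $P_{\le 2N_n}^{L_n}u_{0,n}=u_{0,n}$, and the commutator bound \eqref{E:Pcom}, one peels off $\chc$ and the projection at a cost of $o(1)$. For (c), the main computation: since $\tun$ solves \eqref{lm:1}, we have $(i\partial_t+\Delta)(\chc\tun) = \chc \pmn F(\pmn\tun) + [\Delta,\chc]\tun$, and applying $P_{\le 2N_n}^{L_n}$ and comparing with $\pmll F(\pmll \tilde w_n)$ produces several types of terms: the commutator term $P_{\le 2N_n}^{L_n}[\Delta,\chc]\tun$, which is handled by \eqref{cf:1.0} (derivative bounds on $\chc$) together with \eqref{c1} and local smoothing / Strichartz to place it in $N$; the difference between $P_{\le 2N_n}^{L_n}\chc(\cdots)$ and $\chc(\cdots)$ via \eqref{E:Pcom}; the difference between the torus projection $\pmll$ and the line projection $\pmn$ acting between the cutoffs, handled by \eqref{E:P2P}; the mismatch terms where a frequency projection is not shielded by the appropriate cutoff, handled by \eqref{E:Pmis} after inserting factors of $\chd,\che$ using the nesting $\chc\chd=\chc$ etc.\ and the mass-localization Lemma~\ref{lm:sm} to replace $\tun$ by $\chd\tun$ up to $o(1)$; and finally the algebraic difference $F(\pmn\tun)-F(\pmll \tilde w_n)$ on the common support, which is cubic and so controlled by a sum of terms each carrying a factor $\|\pmll\chc\tun - \chc\pmn\tun\|$ or similar, again $o(1)$ by the preceding estimates, times two factors bounded by \eqref{c1}. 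All of these are estimated in $N([-T,T]\times\btn)$ using the Strichartz pairing $L_t^{4/3}L_x^1$ (or $L_t^1L_x^2$) together with $\|\tun\|_{L_t^4L_x^\infty}\lesssim M$ and $\|\tun\|_{L_t^\infty L_x^2}\lesssim M$ from \eqref{c1}.

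I expect the main obstacle to be the careful bookkeeping in step (c): one has a product of a frequency projection (on the torus) and a cutoff and a cubic nonlinearity evaluated on a line-solution, and one must systematically commute projections past cutoffs, swap torus projections for line projections, and discard tails of $1-\chi_n^j$ — each swap costing $o(1)$ but only when the operator in question is applied to something already localized by a \emph{wider} cutoff. This is precisely why five nested cutoffs $\cha,\dots,\che$ were introduced; the art is to choose, at each occurrence, which $\chi_n^j$ to insert so that (i) the nesting relations $\chi_n^j\chi_n^i=\chi_n^j$ for $j<i$ let one insert it for free (modulo Lemma~\ref{lm:sm}), and (ii) the resulting configuration matches the hypotheses of \eqref{E:P2P}, \eqref{E:Pcom}, or \eqref{E:Pmis}. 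Once the error is shown to be $o(1)$ in $N$, Lemma~\ref{lm:stab} yields $\|u_n - \tilde w_n\|_{S([-T,T]\times\btn)}\le C(M,T)\,o(1)=o(1)$, which is exactly \eqref{pert:2.0}; one should note that the smallness threshold $\eps_0(M,T)$ in Lemma~\ref{lm:stab} is uniform in $n$ because $M$ and $T$ are fixed, so the $o(1)$ errors eventually fall below it. A minor point to address along the way is that $L_n$, which was only required to be ``sufficiently large'' in several earlier lemmas, can be chosen to simultaneously satisfy all those requirements (finitely many conditions, each of the form $L_n\ge L_0(T,N_n)$ or $L_n\gg(\cdots)$), since $N_n$ is already fixed before $L_n$ is selected.
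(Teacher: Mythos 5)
Your overall strategy is exactly the paper's: set $z_n := P_{\le 2N_n}^{L_n}(\chc\tun)$, view it as an approximate solution, verify a uniform mass bound, closeness of initial data, and smallness of the equation error in $N([-T,T]\times\btn)$, then invoke Lemma~\ref{lm:stab}. Your catalogue of error terms (the $[\Delta,\chc]$ commutator, swapping $P_{\le N}$ on the line for $P_{\le N}^L$ on the torus via \eqref{E:P2P}, commutators with cutoffs via \eqref{E:Pcom}, mismatched projections via \eqref{E:Pmis}, and replacing $\tun$ by $\chi_n^j\tun$ via Lemma~\ref{lm:sm}) also matches the paper.

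However, there is a consequential error in the setup of step (c). You write the equation error as
$$
e_n := (i\partial_t+\Delta)\tilde w_n - \pmll F(\pmll \tilde w_n),
$$
and more generally you propose applying Lemma~\ref{lm:stab} ``with $N$ replaced by $2N_n$.'' But the solution $u_n$ you are trying to approximate solves \eqref{per1}, whose nonlinearity is $\pml F(\pml u_n)=P_{\le N_n}^{L_n}F(P_{\le N_n}^{L_n}u_n)$ — the cutoff at $N_n$, not at $2N_n$. The perturbation lemma produces a solution to \eqref{stabt:1} with whichever $N$ you feed in; with $N=2N_n$ you would obtain a solution to the equation with nonlinearity $\pmll F(\pmll\,\cdot)$, and there is no reason for that solution to coincide with $u_n$. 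Since $\pml$ and $\pmll$ differ substantially on the frequency band $N_n\lesssim|\xi|\lesssim 4N_n$ — and $u_n(t)\in\mathcal H_n$ genuinely lives there — this is not a harmless substitution: the difference $\pml F(\pml z_n)-\pmll F(\pmll z_n)$ is not $o(1)$. The correct error term (and the one the paper uses) is $(i\partial_t+\Delta)z_n-\pml F(\pml z_n)$. The outer projection $\pmll$ appears only in the \emph{definition} of $z_n$ (to make $z_n$ comparable to $u_n\in\mathcal H_n$ and to control $z_n(0)-u_{0,n}$ via $\pmll u_{0,n}=u_{0,n}$); it does not replace the $\pml$ in the nonlinearity. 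With this correction the decomposition of the error into the six terms \eqref{per:6}--\eqref{per:11} comes out as in the paper, and your proposed estimates go through. You should also note that the requirement $\tilde u(0)=P_{\le N}^L\tilde u_0$ in Lemma~\ref{lm:stab} is only used to ensure Strichartz bounds hold; since $z_n$ has Fourier support in $|\xi|\lesssim N_n$ up to a band where $m_{\le 1}$ is active, the estimates of Lemma~\ref{lm:stri} with a comparable $N$ still apply — this is implicitly how the paper closes the argument.
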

 
 \begin{rem}
Note that for $0\leq j\leq 4$ and any $t\in\R$, the function $\chi_n^j\tun(t)$ is supported inside an interval of size $L_n$; consequently, we can view it naturally as a function on the torus $\T_n$.  Conversely, the functions $\chi_n^ju_n(t)$ can be lifted to functions on $\R$ that are supported in an interval of length $L_n$.  In what follows, the transition between functions on the line and the torus will be made without further explanation.
\end{rem}

 \begin{proof}
The proof of Theorem~\ref{thm:app} is modeled on that of \cite[Theorem~8.9]{KVZ:nsqz2d}.  For brevity, we write
 \begin{align*}
 z_n:=P_{\le 2 N_n}^{L_n}(\chc \tun).
 \end{align*}
We will deduce \eqref{pert:2.0} as an application of the stability result Lemma ~\ref{lm:stab}.  Consequently, it suffices to verify the following:
 \begin{gather}
 \|z_n\|_{L_t^\infty L_x^2 ([-T,T]\times\btn)}\lsm M \mbox{ uniformly in } n,\label{per:1}\\
 \lim_{n\to \infty}\|z_n (0)-u_n(0)\|_{L^2(\btn)}=0, \label{per:2}\\
 \lim_{n\to \infty}\|(i\partial_t+\Delta)z_n-\pml F(\pml z_n)\|_{N([-T,T]\times\btn)}=0.\label{per:3}
 \end{gather}

Claim \eqref{per:1} is immediate:
 \begin{align*}
 \|z_n\|_{L_t^\infty L_x^2([-T,T]\times\btn)}\lsm \|\tun\|_{L_t^\infty L_x^2 ([-T,T]\times\R)}\lsm \|\tilde u_n(0) \|_{L_x^2(\R)}
 	\lsm \| u_{n,0} \|_{L_x^2(\btn)} \lsm M.
 \end{align*}

To prove \eqref{per:2}, we use $u_{0,n}\in \mathcal H_n$ and \eqref{cf:1.0} as follows:
 \begin{align*}
 \|z_n(0)-u_n(0)\|_{L^2(\btn)}
 &=\|\pmll (\chc u_{0,n}-u_{0, n})\|_{L^2(\btn)}\\
 &\lsm \|\chc u_{0,n}-u_{0,n}\|_{L^2(\btn)} =o(1) \qtq{as} n\to \infty.
 \end{align*}

It remains to verify \eqref{per:3}.  Direct computation gives
\begin{align*}
(i\partial_t+\Delta)z_n -\pml &F(\pml z_n)\\
&=\pmll\Bigl[2(\partial_x \chc)(\partial_x \tun) + (\Delta \chc) \tun\Bigr] \\
 &\quad+\pmll \Bigl[\chc \pmn F(\pmn \tun)-\pml F(\pml(\chc \tun))\Bigr].
\end{align*}
In view of the boundedness of $\pmll$, it suffices to show that the terms in square brackets converge to zero in $N([-T,T]\times\btn)$ as $n\to \infty$.

Using \eqref{c1} and \eqref{cf:1.0}, we obtain
\begin{align*}
\|(\partial_x \chc) (\partial_x\tun) \|_{L^1_tL^2_x([-T,T]\times\btn)}
	&\le  T \|\partial_x \chc\|_{L^\infty_x(\R)} \|\partial_x\tun \|_{L_t^\infty L^2_x([-T,T]\times\R)} = o(1),\\
\|(\Delta \chc) \tun\|_{L^1_t L^2_x ([-T,T]\times\btn)}&\le T\|\partial_x^2 \chc\|_{L^\infty_x(\R)}\|\tun\|_{L_t^\infty L_x^2([-T,T]\times\R)}= o(1), 
\end{align*}
as $n\to\infty$.

To estimate the remaining term, we decompose it as follows:
\begin{align}
\chc \pmn F(\pmn \tun)&-\pml F(\pml(\chc \tun))\notag\\
&=\chc \pmn \bigl[F(\pmn \tun)-F(\pmn(\chc\tun))\bigr]\label{per:6}\\
&\quad+\chc \pmn(1-\chd) F(\pmn(\chc \tun))\bigr]\label{per:7}\\
&\quad+\chc\pmn\chd \bigl[F(\pmn(\chc\tun))- F(\pml(\chc \tun))\bigr]\label{per:8}\\
&\quad+\chc\bigl(\pmn-\pml\bigr)\chd F(\pml(\chc\tun))\label{per:9}\\
&\quad+[\chc, \pml]\chd F(\pml (\chc\tun))\label{per:10}\\
&\quad+\pml(\chc-1) F(\pml(\chc\tun)).\label{per:11}
\end{align}

To estimate \eqref{per:6}, we use H\"older and Lemma~\ref{lm:sm}:
\begin{align*}
\|\eqref{per:6}\|_{N([-T,T]\times\btn)}
&\lsm \|F(\pmn \tun)-F(\pmn(\chc\tun))\|_{L^{6/5}_{t,x}([-T,T]\times\R)}\\
&\lsm T^{\frac 12}\|(1-\chc)\tun\|_{L^\infty_t L^2_x([-T,T]\times\R)}\|\tun\|_{L^6_{t,x}([-T,T]\times\R)}^2= o(1).
\end{align*}

We next turn to \eqref{per:7}. As
\begin{align*}
\|\eqref{per:7}\|_{L^1_tL^2_x([-T,T]\times\btn)}
&\lsm \|\chc \pmn(1-\chd)\|_{L^2(\R)\to L^2(\R)} T^{\frac 12} \|\tun\|_{L_{t,x}^6([-T,T]\times\R)}^3,
\end{align*}
it follows from \eqref{E:Pmis} and \eqref{c1} that this is $o(1)$ as $n\to\infty$.

We now consider \eqref{per:8}. Using \eqref{E:P2P} and \eqref{c1}, we estimate
\begin{align*}
\|&\eqref{per:8}\|_{L_{t,x}^{6/5}([-T,T]\times\btn)}\\
&\lsm T^{\frac 12} \|\chd(\pmn-\pml)\chc\tun \|_{L_t^\infty L_x^2([-T,T]\times\btn)}\|\chc \tun\|_{L_{t,x}^6([-T,T]\times\R)}^2\\
&\lsm T^{\frac 12} \|\chd(\pmn-\pml)\chd \|_{L^2(\R)\to L^2(\R)}\|\chc \tun\|_{L^\infty_t L^2_x ([-T,T]\times\R)}\|\tun\|_{L_{t,x}^6([-T,T]\times\R)}^2 \\
&= o(1).
\end{align*}

Next we turn to \eqref{per:9}. Using \eqref{cf:1.0},  \eqref{E:P2P}, and \eqref{c1}, we get
\begin{align*}
\|\eqref{per:9}&\|_{L^1_tL^2_x([-T,T]\times\btn)}\\
&\lsm \|\chd(\pmn-\pml)\chd\|_{L^2(\R)\to L^2(\R)}\|\che F(\pml(\chc\tun))\|_{L^1_tL^2_x([-T,T]\times\R)}\\
&\lsm o(1) \cdot T^{\frac 12} \|\tun\|_{L_{t,x}^6([-T,T]\times\R)}^3 = o(1).
\end{align*}

To estimate \eqref{per:10}, we use \eqref{E:Pcom} and \eqref{c1} as follows:
\begin{align*}
\|\eqref{per:10}&\|_{L^1_tL^2_x([-T,T]\times\btn)}\\
&\lsm\|[\chc,\pml]\|_{L^2(\btn)\to L^2(\btn)} \| \chd F(\pml(\chc\tilde u_n))\|_{L_t^1L_x^2([-T,T]\times\btn)}\\
&\lsm o(1) \cdot T^{\frac 12}  \|\tun\|_{L_{t,x}^6([-T,T]\times\R)}^3 = o(1).
\end{align*}

Finally, to estimate \eqref{per:11}, we write $\tilde u_n = \chi_n^1\tilde u_n +  (1-\chi_n^1)\tilde u_n$ and then use \eqref{c1}, \eqref{E:Pmis}, and \eqref{cf:1.0}:\begin{align*}
\|\eqref{per:11}\|_{N([-T,T]\times\btn)} &\lesssim \| (\chc-1) F(\pml(\chc\tun)) \|_{L^{6/5}_{t,x}([-T,T]\times\R)} \\
&\lsm T^{\frac12} \|(1-\chc)\pml \chi_n^1\tun\|_{L^\infty_t L^2_x([-T,T]\times\R)} \|\tilde u_n\|_{L_{t,x}^6([-T,T]\times\R)}^2 \\
&\quad+ T^{\frac12} \|(1-\chi_n^1)\tilde u_n\|_{L^\infty_tL_x^2([-T,T]\times\R)}\|\tun\|_{L_{t,x}^6([-T,T]\times\R)}^2\\
&= o(1) \qtq{as} n\to \infty.
\end{align*}

This completes the proof of the theorem.
\end{proof}

\section{Proof of Theorem \ref{thm:nsqz}}

In this section, we complete the proof of Theorem \ref{thm:nsqz}.  To this end, fix parameters $z_*\in L^2(\R)$, $l\in L^2(\R)$ with $\|l\|_2=1$, $\alpha \in \C$, $0<r<R<\infty$, and $T>0$.  Let $M:=\|z_*\|_2 + R$.  Let $N_n\to \infty$ and choose $L_n$ diverging to infinity sufficiently fast so that all the results of Section~\ref{S:5} hold.

By density, we can find $\tilde z_*, \tilde l  \in C_c^\infty(\R)$ such that
\begin{align}\label{tildeapprox}
\| z_*-\tilde z_*\|_{L^2} \le \delta \qquad\qtq{and}\qquad  \|l-\tilde l\|_{L^2} \le \delta M^{-1} \qtq{with} \|\tilde l\|_2=1,
\end{align}
for a small parameter $\delta>0$ chosen so that $\delta < (R-r)/8$.  For $n$ sufficiently large, the supports of $\tilde z_*$ and $\tilde l$ are contained inside the interval $[-L_n/2, L_n/2]$, which means that we can view $\tilde z_*$ and $\tilde l$ as functions on $\T_n=\R/L_n\Z$.  Moreover, 
\begin{equation}\label{z*n}
\|\tilde z_* - P_{\leq N_n}^{L_n} \tilde z_*\|_{L^2(\btn)}\lesssim N_n^{-1} \|\tilde z_*\|_{H^1(\btn)}  = o(1),
\end{equation}
as $n\to\infty$.  Similarly,
\begin{equation}\label{l*n}
\| P_{>2N_n}^{L_n} \tilde l\|_{L^2(\btn)}  = o(1) \quad\text{as $n\to\infty$.}
\end{equation}

Consider now the initial-value problem
\begin{equation}\label{329}
\begin{cases}
(i\partial_t+\Delta) u_n=\pml F(\pml u_n), \qquad(t,x)\in\R\times\btn,\\
u_n(0)\in \mathcal H_n=\{f\in L^2(\btn): \, P_{>2 N_n}^{L_n} f=0\}.
\end{cases}
\end{equation}
This is a finite-dimensional Hamiltonian system with respect to the standard Hilbert-space symplectic structure on $\mathcal H_n$; the Hamiltonian is
$$
H(u) = \int_{\btn} \tfrac12 |\partial_x u|^2 \pm \tfrac14 |\pml u|^4\,dx.
$$
Therefore, by Gromov's symplectic non-squeezing theorem, there exist initial data
\begin{align}\label{main:0}
u_{0,n}\in B_{\mathcal H_n}(P_{\leq N_n}^{L_n} \tilde z_*, R-4\delta)
\end{align}
such that the solution to \eqref{329} with initial data $u_n(0)=u_{0,n}$ satisfies
\begin{align}\label{main:1}
|\langle \tilde l, u_n(T)\rangle_{L^2(\btn)} -\alpha|> r + 4\delta.
\end{align}

Just as in Section~\ref{S:5} we let $\tun:\R\times\R\to\C$ denote the global solution to
\begin{align*}
\begin{cases}
(i\partial_t+\Delta) \tun=\pmn F(\pmn \tun),\\
\tun(0)=\cha u_{0,n},
\end{cases}
\end{align*}
and write $z_n:=\pmll (\chc \tun)$.  By Theorem~\ref{thm:app}, we have the following approximation result:
\begin{align}\label{main:2}
\lim_{n\to \infty}\|z_n-u_n\|_{L^\infty_t L^2_x([-T,T]\times\btn)}=0.
\end{align}

We are now ready to select initial data that witnesses the non-squeezing for the cubic NLS on the line.  By the triangle inequality, \eqref{cf:1.0}, \eqref{main:0}, \eqref{z*n}, and \eqref{tildeapprox},
\begin{align*}
\|\cha u_{0,n}-z_*\|_{L^2(\R)}
&\le \|(\cha-1) u_{0,n}\|_{L^2(\btn)} + \|u_{0,n}-P_{\leq N_n}^{L_n}\tilde z_*\|_{L^2(\btn)}\\
&\quad +\|P_{\leq N_n}^{L_n}\tilde z_*-\tilde z_*\|_{L^2(\btn)} + \| \tilde z_*-z_*\|_{L^2(\R)}\\
&\le o(1) + R-4\delta+ o(1) + \delta\le  R-\delta,
\end{align*}
provided we take $n$ sufficiently large.  Therefore, passing to a subsequence, we may assume that 
\begin{align}\label{main:3}
\cha u_{0,n}\rightharpoonup u_{0,\infty}\in B(z_*,R) \quad \text{weakly in $L^2(\R)$}.
\end{align}

Now let $u_\infty:\R\times\R\to\C$ be the global solution to \eqref{nls} with initial data $u_\infty (0)=u_{0,\infty}$. By Theorem~\ref{T:weak wp},
\begin{align*}
\tun (T) \rightharpoonup u_\infty (T) \quad \text{weakly in $L^2(\R)$}.
\end{align*}
Combining this with Lemma \ref{lm:sm}, we deduce
\begin{align*}
\chc \tun(T)\rightharpoonup u_\infty (T) \quad \text{weakly in $L^2(\R)$}.
\end{align*}
Thus, using also \eqref{l*n}, the definition of $z_n$, \eqref{main:2}, and \eqref{main:1}, we get
\begin{align*}
\bigl|\langle \tilde l, u_\infty (T)\rangle_{L^2(\R)}-\alpha\bigr |
&= \lim_{n\to \infty}\bigl|\langle \tilde l, \chc \tun(T)\rangle_{L^2(\btn)}-\alpha\bigr|\\
&= \lim_{n\to \infty}\bigl |\langle \pmll \tilde l, \chc\tun (T)\rangle_{L^2(\btn)} -\alpha\bigr|\\
&= \lim_{n\to \infty}\bigl |\langle \tilde l, z_n(T)\rangle_{L^2(\btn)} -\alpha\bigr| \\
&= \lim_{n\to \infty}\bigl |\langle \tilde l, u_n(T)\rangle_{L^2(\btn)} -\alpha\bigr| \\
&\geq r + 4\delta.
\end{align*}

Therefore, using 
$$
\|u_\infty(T)\|_{L^2(\R)} = \|u_{0,\infty}\|_{L^2(\R)} < R + \|z_*\|_{L^2(\R)} = M
$$ 
(cf. \eqref{main:3}) together with \eqref{tildeapprox}, we deduce that
\begin{align*}
\bigl |\langle l, u_\infty(T)\rangle-\alpha\bigr|\ge r + 4\delta - \|l-\tilde l\|_2\|u_\infty(T)\|_{L^2} \geq r + 3\delta > r.
\end{align*}
This shows that $u_\infty(T)$ lies outside the cylinder $C_r(\alpha, l)$, despite the fact that $u_\infty(0)\in B(z_*,R)$, and so completes the proof of the theorem.\qed

\end{document}